\DeclareMathAlphabet{\mathpzc}{OT1}{pzc}{m}{it}
\newcommand{\Hardy}{\ensuremath{\mathcal{H}}}
\newcommand{\BMO}{\ensuremath{\text{BMO}}}
\DeclareMathOperator\id{Id}
\DeclareMathOperator{\dive}{div}
\DeclareMathOperator{\A}{A}
\DeclareMathOperator{\arccosh}{arccosh}
\DeclareMathOperator{\graph}{graph}
\DeclareMathOperator{\tr}{tr}
\DeclareMathOperator{\spt}{spt}
\DeclareMathOperator{\cof}{cof}
\DeclareMathOperator{\osc}{osc}
\DeclareMathOperator{\Lip}{Lip}
\newcommand{\R}{\ensuremath{\mathbb{R}}}
\newcommand{\N}{\ensuremath{\mathbb{N}}}
\newcommand{\LL}{\ensuremath{\mathcal{L}}}
\newcommand{\E}{\ensuremath{\mathcal{E}}}
\newcommand{\Ha}{\ensuremath{\mathcal{H}}}
\def\cringle{\mathaccent"7017 }
\newcommand{\Atf}{\ensuremath{\cringle{\A}}}
\newcommand{\ar}{ar}
\newcommand{\sgp}{\Pi}
\newcommand{\ctf}{\vartheta}
\newcommand{\D}{\mathcal{D}}
\newcommand{\nbr}{\#}
\def\R{\mathbb R}
\def\Z{\mathbb Z}
\def\W{\mathcal{W}}
\def\M{\mathcal{M}}
\def\opt{\text{w}}
\def\step#1{\mbox{}\\\underline{\text{Step #1:}}}%
\def\XXint#1#2#3{{\setbox0=\hbox{$#1{#2#3}{\int}$ }
\vcenter{\hbox{$#2#3$ }}\kern-.6\wd0}}
\theoremstyle{note}
\newtheorem{theorem}{Theorem}
\newtheorem{corollary}{Corollary}
\newtheorem{lemma}{Lemma}
\newtheorem{proposition}{Proposition}
\theoremstyle{definition}
\newtheorem*{remark-short}{Remark}
\begin{document}
\title{Confined structures of least bending energy}
\author{Stefan M\"uller}
\address{Stefan M\"uller,  Institute for Applied Mathematics and Hausdorff Center for Mathematics, University of Bonn, Endenicher Alllee 60, D-53115 Bonn, Germany}
\email{sm@hcm.uni-bonn.de}

\author{Matthias R{\"o}ger}
\address{Matthias R{\"o}ger, Technische Universit\"at Dortmund,
Fakult\"at für Mathematik,
Vogelpothsweg 87,
D-44227 Dortmund}
\email{matthias.roeger@tu-dortmund.de}

\subjclass[2000]{Primary 53A05; Secondary 49Q10, 74G65}

\keywords{}

\date{\today}

\begin{abstract}
In this paper we study a constrained minimization problem for the Willmore functional. For prescribed surface area we consider smooth embeddings of the sphere into the unit ball. We evaluate the dependence of the  the minimal Willmore energy of such surfaces on the prescribed surface area and prove corresponding upper and lower bounds. Interesting features arise when the prescribed surface area just exceeds the surface area of the unit sphere. We show that (almost) minimizing surfaces cannot be a $C^2$-small perturbation of the sphere. Indeed they have to be nonconvex and there is a sharp increase in Willmore energy with a square root rate with respect to the increase in surface area.
\end{abstract}
\maketitle
\section{Introduction}
\label{sec:intro}
Constrained minimization problems for bending energies arise naturally in various applications. In biophysics for example the shape of the cell membranes is often modeled as (local) minimizer of an appropriate curvature energy, most notably of the Helfrich--Canham energy
\begin{gather*}
	\E_{HC}(\Sigma)\,=\, \int_\Sigma \big(\kappa_b (H-H_0)^2 +\kappa_g K\big)\,d\Ha^2.
\end{gather*}
Here $\Sigma\subset\R^3$ is a smooth surface describing the shape of the cell, $H$ and $K$ are the mean and Gaussian curvature of $\Sigma$, and the spontaneous curvature $H_0$ and the bending moduli $\kappa_b,\kappa_g$ are given parameters. Under appropriate constraints on the total surface area and on the enclosed volume local minimizers of such shape energies are in good agreement with typical shapes of cell membranes.

In this article we are interested in the minimization of bending energies under an additional {confinement} condition. This problem is motivated by the shape of inner organelles in a biological cell. These structures are confined to the inner volume of the cell. Moreover, as the membrane contributes to their biological function, organelles  often have large surface area (see for example the typical shape of mitochondriae).

We start here a mathematical analysis of a simple prototype of such constrained minimization problems: As curvature energy we consider the Willmore functional and we choose as outer container the unit ball. To give a precise description of the problem let us introduce some notation: Let $a>0$ be given and let $B=B(0,1)$ be the unit ball in $\R^3$. We denote by $\M_{a}$ the class of smoothly embedded surfaces $\Sigma\subset B$ of sphere type with surface area $\ar(\Sigma)=a$. We associate to $\Sigma\in\M_a$ the outer unit normal field $\nu:\Sigma\to\R^3$, denote by  $\kappa_1, \kappa_2$ the principal curvatures of $\Sigma$ with respect to $\nu$, and define the scalar mean curvature $H=\kappa_1+\kappa_2$, the mean curvature vector $\vec{H}=-H\nu$, and the Gauss curvature $K=\kappa_1\kappa_2$.

For $\Sigma\in\M$ we then consider the Willmore energy
\begin{gather}
	\W(\Sigma)\,:=\, \frac{1}{4}\int_\Sigma |\vec{H}|^2\,d\Ha^2 \label{eq:willm}
\end{gather}
and the constrained minimization problem
\begin{gather}\label{eq:def-min-cs}
  \opt(a)\,:=\, \inf_{\Sigma\in\M_{a}} \W(\Sigma).
\end{gather}
We are interested in  the dependence of $\opt(a)$ on the surface area $a$, in particular for large values of $a$. The infimum $\opt(a)$ may not be attained, as limit points of minimal sequences need not to be embedded. Therefore, we can not make use of the Euler--Lagrange equation. It is an interesting open problem to identify a class of (generalized) surfaces that comprises the closure of $\M_a$ and in which the infimum of the Willmore energy is attained. One possible candidate is the class of Hutchinson varifolds that have a unique tangent plane in every point but possibly varying multiplicity.

Our main results are first a general lower bound $\opt(a)\geq a$ and the optimality of this bound for $a=4k\pi$ with $k\in\N$, and second a characterization of the behavior of $\opt$ as $a$ just exceeds the value $4\pi$. For $a=4k\pi$ the optimal value $\opt$ realizes the Willmore energy of $k$ spheres and the varifold limit of a minimal sequence converges to the unit sphere with density $k$. Configurations at $a\approx 4\pi k$ resemble $k$ unit spheres (connected by catonoid like structures in order to have the topology of a sphere). We therefore believe that the behavior of $\opt$ as $a$ crosses $4\pi$ is key for the understanding of the constrained minimization problem. As there are no surfaces that are $C^2$-close to the sphere with area above $4\pi$ a change of behavior at this value can be expected. In fact, we prove a sharp increase in the optimal energy at $4\pi$: the difference in Willmore energy $\opt(a)-4\pi$ behaves like the square root of the area difference $a-4\pi$. The proof of the corresponding lower bound is the most delicate step and uses rigidity estimates for nearly umbilical surfaces shown by De Lellis and Müller \cite{DeMu05,DeMu06}.

Whereas our analysis does use the particular choice of the unit ball as the confinement condition we also gain some insight in the minimization problem for more general containers $C\subset\R^3$. In particular we obtain general upper and lower bounds that are linear in $a$. In fact, if $C\subset B(x_0,R)$ a rescaling argument shows that $\W(\Sigma)\geq \frac{a}{R^2}$ for any $\Sigma\subset  C$. If on the other hand $B(x_1,r)\subset C$ then $\W(\Sigma)\leq 4\pi k$ for any $a=4\pi k r^2$, $k\in\N$. In case of a convex container with $C^2$-boundary we expect that with growing surface area first the full space provided by the container will be used (with a linear growth rate of the minimal Willmore energy) before a protrusion inside the container will be developed (with a square root type increase in Willmore energy).
Comparing the behavior of our constrained minimization problem with the shape of inner structures in cells we remark that our model rather supports formation of single protrusions that grow inside than the formation of multiple folds. This indicates that for a proper model of such structures more details have to be taken into account such as the dynamic process of fold formation or additional constraints on the enclosed volume of the inner structures.

The minimization of the Willmore functional under constraints has been studied in detail for rotationally symmetric surfaces, see \cite{Seif97} for a review. General existence results without any symmetry assumptions were obtained by Simon \cite{Simo93} proving the existence of smooth minimizer for the Willmore functional for tori in $\R^3$. This result was extended  to surfaces with arbitrary prescribed genus by Bauer and Kuwert \cite{BaKu03}. Recently Schygulla \cite{Schy12} showed the existence of smooth minimizers of the Willmore functional for sphere-type surfaces with prescribed isoperimetric ratio. One estimate that was shown in \cite{Simo93} and has been refined in \cite{Topp98} is the following relation between Willmore functional, surface area, and (external) diameter $d$,
\begin{gather*}
	\W(\Sigma) \,\geq\, \frac{d^2\pi^2}{4\ar(\Sigma)}.
\end{gather*}
For our purposes however, this estimates is not very helpful as it degenerates with increasing surface area. An alternative approach for minimizing the Willmore energy is to employ a gradient flow. For the Willmore flow Simonett \cite{Simo01} and Kuwert and Schätzle \cite{KuSc01,KuSc02,KuSc04} have proved existence and convergence results. However, as we need to satisfy constraints on area and confinement such results are not directly applicable to our problem. 

A closely related confinement problem has been studied by numerical simulations in \cite{KaSM12}. For a phase field approach to the minimization of the Willmore energy under a confinement and connectedness constraint see \cite{DoMR13}.
\section{Estimate from below}
We will first prove a general lower bound for surfaces in the unit ball by exploiting the classical Gauss integration by parts formula on manifolds. As remarked above, limit points of minimizing sequences for our constrained minimization problem may leave the class $\M_a$. By Allard's compactness theorem \cite{Alla72} such limit points at least belong to the class of integral 2-varifolds with weak mean curvature in $L^2$, see \cite{Simo83} for the relevant definitions (note that we identify an integral 2-varifold with its associated weight measure on $\R^3$). It is therefore useful (and straigthforward) to prove the lower bound in this extended class of generalized surfaces.
\begin{theorem}\label{thm:est-below}
Let $\mu$ be an integral 2-varifold with weak mean curvature vector $\vec{H}\in
L^2(\mu)$ and support contained in $\overline{B}$. Then we have
\begin{gather}\label{eq:est-below}
  \int \frac{1}{4}|\vec{H}|^2\,d\mu \,\geq\, \mu(\overline{B})
\end{gather}
and equality holds if and only if $\mu=k\Ha^2\lfloor S^2$ for an integer
$k\in\N$. 
\end{theorem}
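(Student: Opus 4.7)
The plan is to apply the first variation formula for varifolds to the position vector field and then use Cauchy--Schwarz, exploiting the confinement $\mathrm{spt}\,\mu\subset\overline{B}$ to bound the resulting moment of $\mu$.

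More precisely, let $X(x)=x$. Since $\mu$ is 2-dimensional, $\mathrm{div}_{T_x\mu}X=2$ at $\mu$-almost every $x$ (the tangential divergence of the identity map on any 2-plane equals $2$). The defining property of the weak mean curvature vector then yields
\begin{equation*}
2\mu(\overline{B})\,=\,\int \mathrm{div}_{T_x\mu}X\,d\mu\,=\,-\int x\cdot\vec{H}\,d\mu.
\end{equation*}
By Cauchy--Schwarz and the assumption $|x|\le 1$ on $\mathrm{spt}\,\mu$,
\begin{equation*}
2\mu(\overline{B})\,\le\,\int |x|\,|\vec H|\,d\mu\,\le\,\Bigl(\int |x|^2\,d\mu\Bigr)^{1/2}\Bigl(\int |\vec H|^2\,d\mu\Bigr)^{1/2}\,\le\,\mu(\overline{B})^{1/2}\Bigl(\int |\vec H|^2\,d\mu\Bigr)^{1/2}.
\end{equation*}
Squaring and dividing by $\mu(\overline B)$ (the case $\mu(\overline B)=0$ is trivial) gives \eqref{eq:est-below}.

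For the equality discussion I would trace the three inequalities used. Equality in the last step forces $|x|=1$ for $\mu$-a.e.\ $x$, so $\mathrm{spt}\,\mu\subset S^2$. Equality in Cauchy--Schwarz forces $|\vec H|$ to be a constant multiple of $|x|=1$, and equality in the bound of $-x\cdot\vec H$ by $|x||\vec H|$ forces $\vec H(x)=-2x$ $\mu$-a.e. Since $\mu$ is an integral 2-varifold concentrated on the smooth 2-manifold $S^2$, at $\mu$-a.e.\ $x$ the approximate tangent plane coincides with $T_xS^2$, so $\mu=\theta\,\Ha^2\lfloor S^2$ for some integer-valued density $\theta\in\N$.

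The only remaining point is to show $\theta$ is constant. I would do this by plugging $\mu=\theta\,\Ha^2\lfloor S^2$ back into the first variation identity: for any test field $Y\in C^1_c(\R^3;\R^3)$,
\begin{equation*}
\int_{S^2}\theta\,\mathrm{div}_{S^2}Y\,d\Ha^2\,=\,-\int_{S^2}\theta\,Y\cdot\vec H_{S^2}\,d\Ha^2,
\end{equation*}
where $\vec H_{S^2}=-2x$ is the classical mean curvature vector of $S^2$. Integration by parts on $S^2$ rewrites the left side as $-\int_{S^2}\theta\,Y\cdot\vec H_{S^2}\,d\Ha^2-\int_{S^2}Y\cdot\nabla^{S^2}\theta\,d\Ha^2$ in the distributional sense, so $\nabla^{S^2}\theta=0$ as a distribution on $S^2$. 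Since $S^2$ is connected, $\theta$ is constant, which finishes the equality case. The main subtlety is the last step: being careful that the weak mean curvature of $\theta\,\Ha^2\lfloor S^2$ coincides with $\theta$ times the classical one precisely when $\theta$ is locally constant, so that the absence of a singular part in the first variation (guaranteed by $\vec H\in L^2(\mu)$) is what rules out jumps in $\theta$.
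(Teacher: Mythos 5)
Your proof is correct, but it takes a genuinely different route from the paper's, most notably in the equality case. For the inequality itself the paper does not use Cauchy--Schwarz: after testing the first variation with $\eta(x)=x$ it completes the square, using that $\vec H\perp T_x\mu$ $\mu$-a.e.\ (Brakke) to replace $x$ by $x^\perp$, and obtains the exact identity $\mu(\overline B)=\int\frac14|\vec H|^2\,d\mu-\frac14\int|\vec H+2x^\perp|^2\,d\mu-\int(1-|x^\perp|^2)\,d\mu$; your Cauchy--Schwarz chain is a bit more elementary (it avoids the perpendicularity theorem), while the paper's identity hands over the equality conditions $\vec H=-2x^\perp$, $|x^\perp|=1$ for free. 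For the equality case the paper, having located $\spt\mu\subset S^2$, invokes the monotonicity-formula (Li--Yau type) density bound $\theta^2(\mu,x_0)\le\frac{1}{4\pi}\W(\mu)$ of Kuwert--Sch\"atzle and the chain $\W(\mu)=\mu(\overline B)=\int_{S^2}\theta^2\,d\Ha^2\le\W(\mu)$ to force constant density, then integrality; you instead write $\mu=\theta\,\Ha^2\lfloor S^2$ from rectifiability plus the support condition and prove constancy of $\theta$ by a constancy-theorem argument, testing the first variation with (extensions of) tangential fields $Z$, for which the curvature term drops out (so you do not even need to have pinned the constant in $\vec H=-cx$ to $c=2$, a step you leave implicit) and which yields $\int_{S^2}\theta\,\dive_{S^2}Z\,d\Ha^2=0$, hence $\nabla^{S^2}\theta=0$ distributionally and $\theta$ constant by connectedness. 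Your version is self-contained modulo two standard facts (approximate tangent planes of a rectifiable measure supported in a $C^1$ surface agree with the surface's tangent planes a.e., and an $L^1$ function with vanishing distributional gradient on a connected manifold is a.e.\ constant), whereas the paper leans on the heavier monotonicity-formula input but gets the density information more directly; both arguments share the (harmless) tacit assumption $\mu\neq 0$ so that $k\ge 1$.
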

\begin{proof}
Since $\mu$ has weak mean curvature $H\in L^2(\mu)$ we have (just by definition of weak mean curvature)  that for any $\eta\in C^1_c(\R^3;\R^3)$ the first variation formula
\begin{gather*}
	\int \dive_{T_x\mu}\eta(x)\,d\mu(x)\,=\, -\int  \vec{H}(x)\cdot x\,d\mu(x)
\end{gather*}
holds. Consider now the vector field
$\eta(x):=x$. Then $\dive_{T_x\mu}\eta(x)=2$ and we deduce
\begin{align*}
  2\mu(\overline{B})\,&=\, \int \dive_{T_x\mu}\eta(x)\,d\mu(x)\\
  &=\, -\int
  \vec{H}(x)\cdot x\,d\mu(x) \\
  &=\, \int \frac{1}{4} |\vec{H}|^2\,d\mu(x) +\int 1
  \,d\mu(x)-\frac{1}{4}\int \big|\vec{H}+2x^\perp\big|^2\,d\mu(x) \\
  &\qquad - \int \big(1-|x^\perp|^2\big)\,d\mu(x),
\end{align*}
where for $x\in \spt(\mu)$ the projection onto $(T_x\mu)^\perp$ is denoted by $x^\perp$ and where we have used that $\vec{H}(x)$ is perpendicular to $T_x\mu$ in $\mu$-almost every point \cite[Thm 5.8]{Brak78}. 
From the last equality we obtain
\begin{align}
  \mu(\overline{B}) &=\, \int \frac{1}{4} |\vec{H}|^2\,d\mu(x) -\frac{1}{4}\int \big|\vec{H}+2 x^\perp\big|^2\,d\mu(x) 
   - \int \big(1-|x^\perp|^2\big)\,d\mu(x). \label{eq:est-below-1} 
\end{align}
Since $|x|\leq 1$ this immediately implies \eqref{eq:est-below}.
Equality in \eqref{eq:est-below} holds if and only if $x^\perp=-\frac{1}{2}\vec{H}(x)$ and $|x^\perp|=1$ for  $\mu$-almost every $x\in \overline{B}$. This implies $x=x^\perp $ and $|x|=1$ for $\mu$-almost every point $x\in\overline{B}$, in particular $\spt(\mu)\subset S^2$. 
From the monotonicity formula one derives \cite[(A.17)]{KuSc04} that for any $x_0\in S^2$  the two-dimensional density satisfies $\theta^2(\mu,x_0)\leq \frac{1}{4\pi}\W(\mu)$. Therefore, if equality holds in \eqref{eq:est-below}, then
\begin{gather*}
	\W(\mu) \,=\, \mu(\overline{B})\,=\, \int_{S^2} \theta^2(\mu,x_0) \,d\Ha^2(x_0)\,\leq\, \W(\mu)
\end{gather*}
and we thus obtain that $\theta^2(\mu,\cdot)=\frac{1}{4\pi}\W(\mu)$ hold $\mu$-almost everywhere. By integrality of $\mu$ this in particular implies $\mu=  k \Ha^2\lfloor S^2$.
\end{proof}
This result immediately implies a lower bound for $\opt$ and shows that equality can only be attained for $a=4k\pi$ with $k\in\N$.
\begin{corollary}\label{cor:est-below}
We have
\begin{gather*}
  \opt(a)\,\geq\, a\quad \text{ for all }a>0,\\
  \opt(a)\,>\, a\quad\text{ for all }a\in \R_+\setminus\{4k\pi
  \,:\,k\in\N\}.
\end{gather*}
\end{corollary}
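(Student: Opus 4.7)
The plan is to derive both statements as direct consequences of Theorem~\ref{thm:est-below}, with the strict inequality requiring an additional compactness argument to handle the infimum.

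For the non-strict bound, let $\Sigma \in \M_a$ be arbitrary. The associated weight measure $\mu_\Sigma := \Ha^2\lfloor\Sigma$ is an integral $2$-varifold with support contained in $\overline{B}$ and with weak mean curvature equal to the classical mean curvature vector $\vec{H}\in L^2(\mu_\Sigma)$. Theorem~\ref{thm:est-below} then yields
\begin{gather*}
  \W(\Sigma)\,=\,\int \tfrac{1}{4}|\vec{H}|^2\,d\mu_\Sigma \,\geq\, \mu_\Sigma(\overline{B})\,=\,\ar(\Sigma)\,=\,a.
\end{gather*}
Taking the infimum over $\Sigma\in\M_a$ yields $\opt(a)\geq a$.

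For the strict inequality I would argue by contradiction. Suppose $\opt(a)=a$ and choose a minimizing sequence $\Sigma_j\in\M_a$ with $\W(\Sigma_j)\to a$. The corresponding varifolds $\mu_j:=\Ha^2\lfloor\Sigma_j$ satisfy $\mu_j(\R^3)=a$, have uniformly bounded mean curvature in $L^2$, and are supported in the fixed compact set $\overline{B}$. Allard's compactness theorem \cite{Alla72} then produces (after passing to a subsequence) an integral $2$-varifold $\mu$ with $\spt\mu\subset\overline{B}$ and weak mean curvature in $L^2(\mu)$, and the Willmore functional is lower semi-continuous along the convergence, so $\W(\mu)\leq a$.

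To apply the rigidity part of Theorem~\ref{thm:est-below}, I must identify $\mu(\overline{B})=a$. Weak-$*$ convergence of the weight measures gives $\mu(\R^3)\leq\liminf_j\mu_j(\R^3)=a$ (open-set Portmanteau applied to $\R^3$) and $\mu(\overline{B})\geq\limsup_j\mu_j(\overline{B})=a$ (closed-set Portmanteau), hence $\mu(\overline{B})=a$. Combining this with the lower bound $\W(\mu)\geq\mu(\overline{B})=a$ from Theorem~\ref{thm:est-below} and the inequality $\W(\mu)\leq a$ above forces equality in \eqref{eq:est-below}. The equality case of Theorem~\ref{thm:est-below} yields $\mu=k\Ha^2\lfloor S^2$ for some $k\in\N$, whence $a=\mu(\overline{B})=4k\pi$. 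Thus $\opt(a)=a$ can only occur for $a\in 4\pi\N$, and combined with $\opt(a)\geq a$ this gives $\opt(a)>a$ whenever $a\notin\{4k\pi:k\in\N\}$.

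The only subtle point is ensuring that integral-varifold convergence is compatible with both the lower semi-continuity of $\W$ and the Portmanteau-type statements for the weight measures; both facts are standard in the theory (compare the discussion surrounding Allard's theorem in \cite{Simo83}), so no substantive obstacle is expected beyond correctly invoking them.
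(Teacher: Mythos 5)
Your argument is correct and follows essentially the same route as the paper: apply Theorem~\ref{thm:est-below} for the non-strict bound, then use a minimizing sequence, Allard's compactness theorem, mass convergence of the weight measures, and the equality case of Theorem~\ref{thm:est-below} to exclude $\opt(a)=a$ unless $a\in 4\pi\N$. The only minor difference is that you cite lower semicontinuity of the Willmore energy under varifold convergence as standard, whereas the paper verifies it directly from the first variation identity together with the Cauchy--Schwarz duality characterization of $\int\frac14|\vec H|^2\,d\mu$; your Portmanteau argument for $\mu(\overline B)=a$ is a slightly more roundabout but equally valid version of the paper's direct mass-convergence step.
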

\begin{proof}
Let $a\in\R$ be fixed and $(\Sigma_j)_{j\in\N}$ be a minimal sequence in
$\M_a$. We associate with $\Sigma_j$ the integer rectifiable varifolds $\mu_j=\Ha^2\lfloor
\Sigma_j$. For all $j\in \N$ the varifold $\mu_j$ has total mass $\mu_j(\overline{B})=a$ and 
mean curvature vector $\vec{H}_j$ that is uniformly bounded in $L^2(\mu_j)$ by $\|\vec{H}_j\|_{L^2(\mu_j)}^2\,\leq\,4\opt(a)+1$. By
Allards compactness theorem for integral varifolds \cite{Alla72}  there exists a
subsequence of $\mu_j$ that converges to an integral varifold $\mu$ with weak mean
curvature $\vec{H}\in L^2(\mu)$. In addition the support of $\mu$ is contained in $\overline{B}$ and we have
\begin{gather*}
  \mu(\overline{B})\,=\,\lim_{j\to\infty} \mu_j(\overline{B})\,=\, a.
\end{gather*}
Furthermore we obtain that for any $\eta\in C^1_c(\overline{B})$
\begin{align*}
  \int \vec{H}\cdot\eta\,d\mu \,&=\, -\int
  \dive_{T_x\mu}\eta(x)\,d\mu(x) \,=\, -\lim_{j\to\infty}\int_{\Sigma_j}
  \dive_{T_x\Sigma_j}\eta(x)\,d\Ha^2(x) \\
  &=\, \lim_{j\to\infty}\int_{\Sigma_j}
  \vec{H}_j\cdot\eta\,d\Ha^2 \\
  &\leq\, \liminf_{j\to\infty}\Big(\int_{\Sigma_j}
  \eta^2\,d\Ha^2(x)\Big)^{1/2}\Big(\int_{\Sigma_j}
  |\vec{H}_j|^2\,d\Ha^2\Big)^{1/2}\\
  &=\, \Big(\int
  \eta^2\,d\mu\Big)^{1/2}\liminf_{j\to\infty}\Big(\int_{\Sigma_j}
  |\vec{H}_j|^2\,d\Ha^2\Big)^{1/2}
\end{align*}
and it follows that
\begin{gather*}
  \int \frac{1}{4}|\vec{H}|^2\,d\mu \,=\, \frac{1}{4}\Big(\sup_{\|\eta\|_{L^2(\mu)}\leq 1}
  \int \vec{H}\cdot\eta\,d\mu\Big)^2\,\leq\, \liminf_{j\to\infty}
  \W(\Sigma_j)\,=\, 
  \opt(a).
\end{gather*}
Theorem \ref{thm:est-below} then first yields $\opt(a)\geq \mu(\overline{B})=a$ and secondly that $\opt(a)=a$ implies
$\mu=k\Ha^2\lfloor S^2$ for an $k\in\N$ and $\mu(\overline{B})=4k\pi $.
\end{proof}

We next show that in fact for $a= 4k\pi, k\in\N,$ the optimal value $\opt(a)=a$ is achieved.
\begin{theorem}\label{thm:m-sphere}
Let $a=4k\pi$ for $k\in\N$. Then $\opt(a)=a$ and any minimizing
sequence converges as varifolds to $\mu=k\Ha^2\lfloor S^2$.
\end{theorem}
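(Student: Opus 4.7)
The lower bound $\opt(4k\pi) \geq 4k\pi$ is already contained in Corollary \ref{cor:est-below}, so the real content is the matching upper bound together with the identification of the varifold limit.

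For the upper bound, the plan is to construct, for each small $\delta > 0$, a smoothly embedded sphere-type surface $\Sigma_\delta \subset \overline{B}$ with $\ar(\Sigma_\delta) = 4k\pi$ and $\W(\Sigma_\delta) = 4k\pi + o(1)$; the limiting varifold $k\Ha^2 \lfloor S^2$ is already singled out by the equality case of Theorem \ref{thm:est-below}. I would take $k$ concentric round spheres of radii $r_i = 1 - (i-1)\delta$ inside $\overline{B}$, each contributing exactly $4\pi$ to $\W$, and join them into a single topological sphere by short catenoidal bridges between consecutive shells. Catenoids are minimal, so the bridges contribute to $\W$ only through the short $C^2$-matching regions with the spheres, and this contribution can be made $o(1)$ as the neck radius tends to zero. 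The area of this configuration equals $4k\pi - O(\delta)$, because $\sum r_i^2 < k$ whenever the round shells are distinct; to compensate this deficit I would add small spherical bumps of amplitude of order $\delta$ on the inner shells, each contributing area and Willmore of order $\delta$. A continuity (intermediate value) argument on the bump parameters then enforces $\ar(\Sigma_\delta) = 4k\pi$ exactly while preserving $\W(\Sigma_\delta) \to 4k\pi$.

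For the statement about minimizing sequences, let $(\Sigma_j) \subset \M_{4k\pi}$ be any minimizing sequence, so $\W(\Sigma_j) \to 4k\pi$, and set $\mu_j := \Ha^2 \lfloor \Sigma_j$. Exactly as in the proof of Corollary \ref{cor:est-below}, Allard's compactness theorem extracts along any subsequence a further subsequence converging as varifolds to an integral $2$-varifold $\mu$ supported in $\overline B$, with $\mu(\overline B) = 4k\pi$ and $\int \tfrac14 |\vec H|^2 \, d\mu \leq 4k\pi$. The rigidity statement of Theorem \ref{thm:est-below} then forces $\mu = k\Ha^2 \lfloor S^2$, and since every subsequence admits a sub-subsequence converging to the same limit, the whole sequence $(\mu_j)$ converges as varifolds to $k\Ha^2 \lfloor S^2$. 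The hardest step will be the exact area tuning in the recovery sequence: the algebraic fact that $\sum r_i^2 < k$ for distinct round spheres of radius at most $1$ means that the area deficit must be supplied by geometric structures whose Willmore cost vanishes in the limit, and a careful bookkeeping of the area and Willmore contributions across the catenoid-sphere matching and bump regions is required to make this precise.
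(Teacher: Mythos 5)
Your proposal is correct and takes essentially the same route as the paper: the lower bound and the identification of the varifold limit are obtained exactly as in Corollary \ref{cor:est-below} combined with the equality case of Theorem \ref{thm:est-below}, and the recovery sequence consists of $k$ nested spheres with radii close to $1$ joined by small catenoidal necks, with the area constraint enforced exactly by small area-compensating bumps and a continuity argument. The paper's Section \ref{sec:proof} carries out precisely the bookkeeping you identify as the hard step ($C^1$ matching of sphere, transition arc and catenoid via the implicit function theorem, explicit area and Willmore expansions as $r\nearrow 1$, and the bump tuning).
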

\begin{proof}
The last property is proved by similar arguments as used in the proof of
Corollary \ref{cor:est-below}. To show that $\opt(a)=a$ holds we
construct a  sequence $(\Sigma_j)_{j\in\N}\subset \M_a$ such that
$\W(\Sigma_j)\to a$. For $k=1$ the unit sphere is the unique minimizer.
The main idea for $k=2$ is to take two concentric spheres, one with
radius one and the 
other with radius close to one. For both spheres we remove a cap close to
the north-pole, deform the upper halves, and connect them by a catenoid-like
structure, see Figure \ref{fig:const1}.  We give the details of the
proof in Section \ref{sec:proof}.
\begin{figure}[h]
\begin{minipage}{\textwidth}
\begin{center}
  \begin{minipage}[b]{.4\linewidth} 
  \includegraphics*[width=6cm]{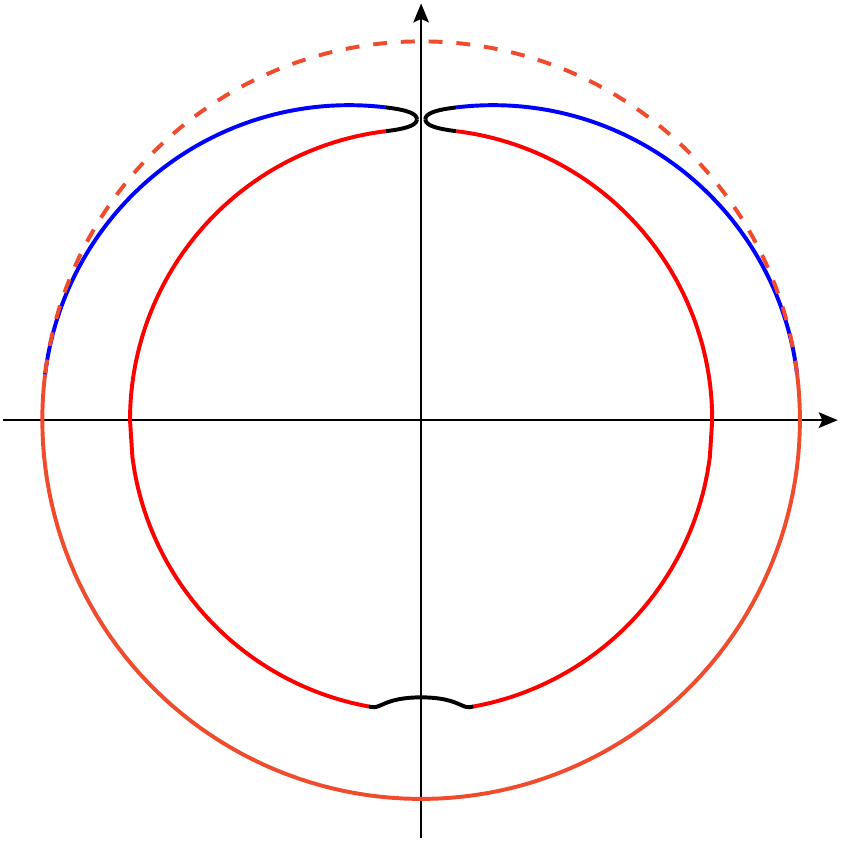}
  \end{minipage}
  \hspace{.1\linewidth}
  \begin{minipage}[b]{.4\linewidth} 
  \includegraphics*[width=6cm]{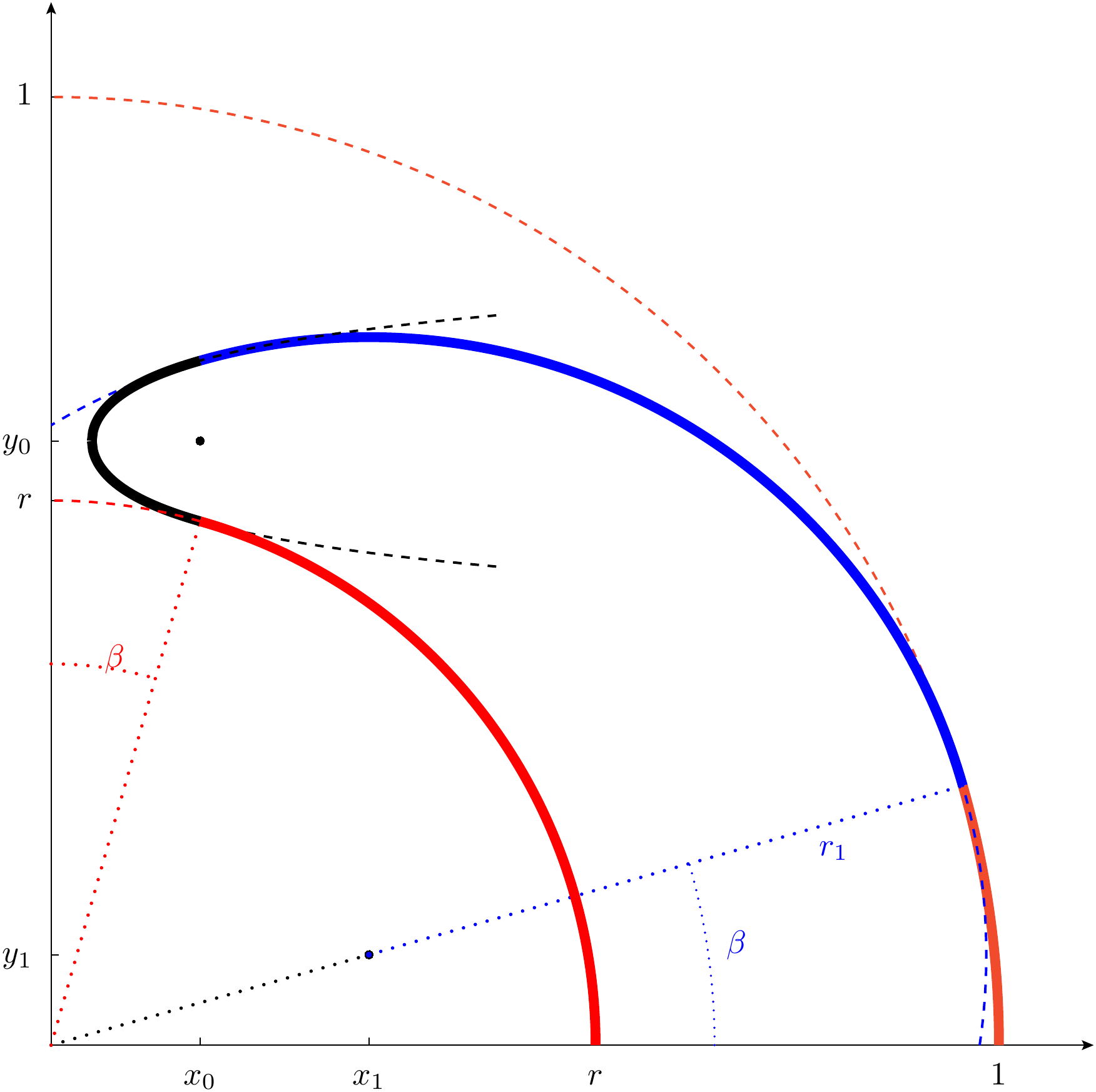}
  \end{minipage}
\end{center}
  \caption{Construction of a minimizing sequence. For details we refer to the Appendix.}
  \label{fig:const1}
\end{minipage}
\end{figure}

For $k\geq 3$ we take $k$ nested spheres and apply $(k-1)$-times the
construction described for $k=2$. 
\end{proof}
\section{Upper bound for $a$ close to $4\pi k$}
Using that a dilation of space does not change the Willmore energy we
obtain the following monotonicity property.
\begin{proposition}\label{prop:A_below}
The mapping $a\mapsto \opt(a)$ is monotonically increasing. In particular, for all $a\leq 4\pi k$ we have
\begin{gather}\label{eq:A_below}
  \opt(a)\,\leq\, 4\pi k.
\end{gather}
\end{proposition}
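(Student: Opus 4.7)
The plan is to establish monotonicity by a direct scaling argument, after which the upper bound follows at once from Theorem~\ref{thm:m-sphere}.

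Fix $0<a_1\leq a_2$ and take any $\Sigma\in\M_{a_2}$. I would set $\lambda:=\sqrt{a_1/a_2}\in(0,1]$ and consider the rescaled surface $\lambda\Sigma:=\{\lambda x:x\in\Sigma\}$. Three observations make this work: (i) smoothness, embeddedness and sphere-type topology are preserved under the homothety $x\mapsto \lambda x$, so $\lambda\Sigma$ is again an admissible surface; (ii) since $\Sigma\subset\overline{B}=\overline{B(0,1)}$ and $\lambda\leq 1$, one has $\lambda\Sigma\subset\lambda\overline{B}\subset\overline{B}$, i.e.\ the confinement condition is maintained; (iii) the surface area scales as $\ar(\lambda\Sigma)=\lambda^2\ar(\Sigma)=a_1$, so $\lambda\Sigma\in\M_{a_1}$.

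The key property is the scale invariance of the Willmore energy on surfaces in $\R^3$: under $\Sigma\mapsto\lambda\Sigma$ the scalar mean curvature rescales as $H\mapsto\lambda^{-1}H$ and the area element as $d\Ha^2\mapsto\lambda^2\,d\Ha^2$, hence $\W(\lambda\Sigma)=\W(\Sigma)$. Taking the infimum over $\Sigma\in\M_{a_2}$ therefore gives $\opt(a_1)\leq \opt(a_2)$, which is the desired monotonicity.

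For the second statement, let $a\leq 4\pi k$. By monotonicity, $\opt(a)\leq \opt(4\pi k)$, and Theorem~\ref{thm:m-sphere} gives $\opt(4\pi k)=4\pi k$, yielding \eqref{eq:A_below}. There is no real obstacle here; the only thing to check carefully is that the dilation is centered at the origin of the ball (so that confinement is preserved), which is a harmless normalization since $B=B(0,1)$.
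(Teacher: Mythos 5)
Your proposal is correct and follows essentially the same route as the paper: rescale surfaces in $\M_{a_2}$ by the factor $\sqrt{a_1/a_2}\leq 1$, use scale invariance of the Willmore energy together with preservation of confinement, embeddedness and topology to get $\opt(a_1)\leq\opt(a_2)$, and then invoke Theorem~\ref{thm:m-sphere}. The only cosmetic difference is that you take the infimum over all of $\M_{a_2}$ directly while the paper rescales a minimizing sequence, which is equivalent.
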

\begin{proof}
Fix $0<a_1<a_2$ and let $(\tilde{\Sigma}_j)_{j\in\N}$ be a minimal sequence in $\M_{a_2}$,
\begin{gather*}
  \ar(\tilde{\Sigma}_j)\,=\, a_2 \quad\text{ for all }j\in\N,\qquad
  \lim_{j\to\infty} \W(\tilde{\Sigma}_j)\,\to\, \opt(a_2).
\end{gather*}
Let $s:= \sqrt{\frac{a_1}{a_2}}<1$ and denote by $\vartheta_s:\R^3\to\R^3$ the
dilation by factor $s$, i.e. $\vartheta_s(x)=sx$. Define
\begin{gather*}
  \Sigma_j\,:=\, \vartheta_s(\tilde{\Sigma}_j).
\end{gather*}
Then $\ar(\Sigma_j)\,=\, s^2\ar(\tilde{\Sigma}_j)\,=\, a_1$ and $\Sigma_j\in\M_{a_1}$ for all $j\in\N$. Moreover
\begin{gather*}
  \W(\Sigma_j)\,=\,\W(\tilde{\Sigma}_j)\,\to\, \opt(a_2)
\end{gather*}
and therefore $\opt(a_1)\leq\opt(a_2)$. Since $\opt(4\pi k)=4\pi k$ by Theorem \ref{thm:m-sphere} the second conclusion follows.
\end{proof}
For $k=1$ the sphere with radius $r(a):=\sqrt{a/(4\pi)}$ is the unique
minimizer of $\W$ in $\M_a$ (up to translations) and \eqref{eq:A_below}
is sharp. 

For $a$ approaching $4\pi $ from above we have the following upper bound.
\begin{proposition}\label{prop:A-above}
For all $\delta>0$ there exists a constant $C>0$ such that
\begin{gather}\label{eq:est-A-above}
  \opt(a) - 4 \pi k\,\leq\, C\cdot \sqrt{a-4\pi }
\end{gather}
for all $4\pi k\leq a< 4\pi k+\delta$, $k\in\N$.
\end{proposition}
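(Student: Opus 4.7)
The plan is to construct an explicit admissible surface $\Sigma\in\M_a$ with Willmore energy at most $4\pi k + C\sqrt{a-4\pi k}$ for $a$ just above $4\pi k$; since $\sqrt{a-4\pi k}\le\sqrt{a-4\pi}$, this immediately implies the stated bound. For $k=1$, I would start with a sphere of radius $1-\zeta$ for a suitably small $\zeta>0$, strictly contained in $B$; for $k\ge 2$, I use Theorem \ref{thm:m-sphere} to fix a near-minimizer $\Sigma_0\in\M_{4\pi k}$ with $\W(\Sigma_0)<4\pi k+\epsilon$ for arbitrarily small $\epsilon>0$, arranged so that at least one of its nearly-spherical components lies strictly inside $B$ with some clearance. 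In either case the key construction is an additional small, inward, radially symmetric ``dimple'' on one sphere-like piece, which allows us to tune the total area to $a$ at an energetic cost of only $O(\sqrt{\eta})$, where $\eta:=a-4\pi k$.

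For the dimple itself, fix a point $p$ on the sphere-like piece and, in a local chart in which $p$ corresponds to the origin, deform the piece normally by $v(x)=-\delta\,\chi(|x|/\rho)$, where $\chi\ge 0$ is a fixed smooth radial bump function supported in the unit disk and $\delta,\rho>0$ are small parameters linked by $\delta=c\rho^2$ with a constant $c>0$ to be chosen. Treating the piece as a unit sphere for the leading-order analysis, the area expansion reads $A(\Sigma)-A_0 = -4\pi C_1\delta\rho^2 + \frac{1}{2}C_2\delta^2 + o(\delta^2)$, where $A_0$ is the area of the starting configuration, $C_1>0$ comes from the (negative) first variation $\int Hv\,d\Ha^2=2\int v\,d\Ha^2$, and $C_2>0$ from the second variation $\int|\nabla v|^2 + (H^2-|A|^2)v^2\,d\Ha^2$. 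With $\delta=c\rho^2$ both contributions are of order $\rho^4$, and for $c$ large enough the positive second-order term dominates, giving a net area change $\sim\rho^4$. Since the unit sphere is a critical point of $\W$, the first variation of Willmore vanishes; the second variation is controlled by $\int(\Delta v+2v)^2\,d\Ha^2\sim\delta^2/\rho^2 = c^2\rho^2$, yielding a Willmore excess of the same order.

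Given $\eta>0$ small, I tune $\rho$ so that the dimple's area change, plus any deficit coming from the starting configuration, equals $\eta$: this gives $\rho\sim\eta^{1/4}$ and a Willmore excess bounded by $C\sqrt\eta$. The deficits from $\zeta$ (when $k=1$) and from the remaining nearly-spherical components of $\Sigma_0$ (when $k\ge 2$) can be chosen of order $\eta$ themselves, so that accounting for them only changes the constant $C$; taking $\epsilon\le \frac{1}{2}C\sqrt{\eta}$ in the choice of $\Sigma_0$ when $k\ge 2$ absorbs its residual Willmore excess into the final constant.

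The main obstacle is the accurate asymptotic expansion of area and Willmore on the dimpled surface, and in particular checking that the positive second-order area contribution genuinely dominates the negative first-order one --- the resulting balance $\delta\sim\rho^2$ is exactly what produces the square-root rate. A secondary but nontrivial issue for $k\ge 2$ is adapting the construction of Theorem \ref{thm:m-sphere} so that at least one sphere-like component sits strictly inside $B$, leaving clearance for the inward dimple; one places the dimple away from all catenoidal necks (say near the south pole of the innermost sphere) to keep the resulting surface smoothly embedded in $B$.
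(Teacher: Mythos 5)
Your construction is essentially the paper's: it too grows a small inward bump of depth $t=\alpha s^2$ on a spherical piece so that the negative first-order area change ($\sim t s^{2}$) is dominated by the positive gradient-quadratic term ($\sim t^{2}$), giving an area excess of order $s^{4}$ at a Willmore cost of order $s^{2}$, and for $k\ge 2$ it likewise adjusts the nested-sphere construction of Theorem \ref{thm:m-sphere} by such a bump placed on an inner sphere away from the necks. The only real difference is presentational: where you appeal to the second variation of $\W$ (whose remainder is not actually of lower order here, since $\delta/\rho^{2}$ stays of order one), the paper simply bounds the mean curvature of the modified patch pointwise and multiplies by its area $\sim s^{2}$, which is the cleaner rigorous route through what you call the main obstacle.
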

\begin{proof}
The first five steps of the proof deal with the case $k=1$.
\step{1}
We modify the unit sphere by growing a `bump', directed inwards and
supported close to $(0,0,1)$. First we choose two parameters $0<s,t\ll 1$
controlling the support of the bump and its extension. We fix
a symmetric function $\eta\in C^\infty(-1,1)$ that is positive inside
its support and  decreasing on $(0,1)$. We define
\begin{gather*}
  \psi=\psi_{s,t}: [0,1]\,\to\,\R,\qquad \psi(r)\,:=\, \sqrt{1-r^2}
  -t\eta(rs^{-1}). 
\end{gather*}
Next let
\begin{gather*}
  \Psi: B^2_1(0)\,\to\, \R,\qquad \Psi(x)=\psi(|x|)
\end{gather*}
and define
\begin{gather*}
  M_{s,t}\,:=\, \graph \Psi,\qquad \Sigma_{s,t}\,:=\, M_{s,t}\cup S^2_-,
\end{gather*}
where $S^2_-$ denotes the lower half of the unit sphere.
Then, for $t<t_0(\eta)$, the surface $\Sigma_{s,t}$ 
is smooth, compact, without boundary,
and is contained in $B_1^3(0)$. Moreover, we have $\Sigma_{s,0}=
S^2$.
\step{2}
We compute the surface area element $g$, the scalar mean curvature $H$, and the
Gaussian curvature $K$ of $M_{s,t}$. We first obtain
\begin{align}
  \psi'(r)\,&=\, -\frac{r}{\sqrt{1-r^2}} -
  \frac{t}{s}\eta'(rs^{-1}),\\
  \psi''(r) \,&=\, -(1-r^2)^{-3/2} -\frac{t}{s^2}\eta''(rs^{-1}).
\end{align}
For the surface area element $g(x)=g(r)$ we deduce
\begin{gather}
  g(r)^2\,=\, 1+ |\nabla \psi|^2\,=\, \frac{1}{1-r^2} +
  2\frac{r}{1-r^2}\frac{t}{s}\eta'(rs^{-1})
  +\frac{t^2}{s^2}\eta'(rs^{-1})^2. \label{eq:comp-g}
\end{gather}
For the scalar mean curvature $H(r)=H(x)=-\nabla\cdot
\big(\nabla\psi(x)/g(x)\big)$ we have 
\begin{align}
  g(r)^3 H(r)\,=&\, 2(1-r^2)^{-3/2} + \frac{t}{s^2}\eta''(rs^{-1}) +
  \frac{t}{s}\frac{1}{r} \eta'(rs^{-1}) + 3
  \frac{t}{s}\frac{r}{1-r^2}\eta'(rs^{-1}) \notag\\
  &+ 3\frac{t^2}{s^2}\frac{1}{\sqrt{1-r^2}}\eta'(rs^{-1})^2 +
  \frac{1}{r}\frac{t^3}{s^3} \eta'(rs^{-1})^3. \label{eq:comp-H}
\end{align}
\step{3}
We choose $t$ in dependence of $s$ such that $M_{s,t}$ has larger area
than the half-sphere and such that the area converges to $2\pi$ as $s\to
0$.

We first observe that $M_{s,t}$ only differs from $S^2\cap\{x_3>0\}$ in
$B_s^2(0)\times\R$. Therefore, by a Taylor expansion of the square root
in $g(r)$,
\begin{align}
  &\ar(M_{s,t})-\ar(S^2\cap\{x_3>0\})\notag\\
  =\, &\int_0^s 2\pi r g(r)\,dr -
  \int_0^s 2\pi r \frac{1}{\sqrt{1-r^2}}\,dr \notag\\
  =\, &2\pi \int_0^s
  \frac{r}{2}\sqrt{1-r^2}\Big(2\frac{t}{s}\frac{r}{\sqrt{1-r^2}}\eta'(rs^{-1}) +
  \frac{t^2}{s^2}\eta'(r/s)^2\Big)\, dr + 2\pi\int_0^s r R_{s,t}(r)\,dr,
  \label{eq:diff-area1} 
\end{align}
where $|R_{s,t}(r)|\,\leq\, C\Big|2\frac{t}{s}\frac{r}{\sqrt{1-r^2}} +
\frac{t^2}{s^2}\eta'(r)^2\Big|^2$. For $t\ll s$ we therefore can approximate
\begin{align}
  &\ar(M_{s,t})-\ar(S^2\cap\{x_3>0\}) \notag\\
  \approx\,&
  2\pi \int_0^s\Big(
  \frac{t}{s}r^2\eta'(rs^{-1}) +
  \frac{t^2}{s^2}\frac{r}{2}\sqrt{1-r^2}\eta'(rs^{-1})^2 \Big)\,dr
  \notag\\
  \approx\,& 2\pi t\int_0^1 \Big(s^2\varrho^2\eta'(\varrho) +
  t\frac{\varrho}{2}\eta'(\varrho)^2\Big)\, d\varrho.
  \label{eq:diff-area2} 
\end{align}
We now can choose $\alpha\ll1$ depending only on $\eta$ such that the
for $t=\alpha s^2$ the right-hand side is positive and converges to zero
with $s\to 0$, more precisely
\begin{gather}
  \ar(M_{s,t})-\ar(S^2\cap\{x_3>0\}) 
  \approx\, 2\pi \alpha s^4 C(\eta)\,>\,0. \label{eq:asym-area}
\end{gather}
\step{4}
We next show that the mean curvature is uniformly bounded in
$s>0$. Since $g(r)\approx 1$ it is sufficient to bound the right-hand
side of \eqref{eq:comp-H}. We estimate the different terms.
\begin{align*}
  &2(1-r^2)^{-3/2} \,\approx\, 2,\\
  &|\frac{t}{s^2}\eta''(rs^{-1})| \,\leq\, \alpha
  \|\eta''\|_{C^0},\\
  &0\,\geq\, \frac{t}{s}\frac{1}{r}\eta'(rs^{-1})\,\geq\, -\alpha
  \frac{s}{r}\big(\eta'(rs^{-1})-\eta'(0)\big)\,\geq\,
  \alpha\|\eta''\|_{C^0},\\
  &0\,\geq\, \frac{t}{s}\frac{r}{1-r^2}\eta'(rs^{-1})\,\geq\,
  \alpha\frac{sr}{1-r^2} \eta'(rs^{-1})\,\geq\, -
  2\alpha s^2\|\eta'\|_{C^0},\\
  &0\,\leq\,\frac{t^2}{s^2}\frac{1}{\sqrt{1-r^2}}\eta'(rs^{-1})^2
  \,\leq\, \frac{1}{2}\sqrt{2}\alpha^2 s^2\|\eta'\|_{C^0}^2,\\ 
  &0\,\geq\,\frac{t^3}{s^3}\frac{1}{r}\eta'(rs^{-1})\,\geq\,-r^2\alpha^3\|\eta''\|_{C^0}^3
  \,\geq\,-s^2\alpha^3\|\eta''\|_{C^0}^3.
\end{align*}
Together with \eqref{eq:comp-H} this yields
\begin{gather}
  |H(r)|\,\leq\, C(\eta). \label{eq:est-H}
\end{gather}
\step{5}
By the construction above we obtain a sequence $s\to 0$ and smooth,
compact surfaces $\Sigma_s$ without boundary
and contained in $B_1^3(0)$, such that 
\begin{gather*}
  a(s)\,:=\,\ar(\Sigma_s)\,>\,4\pi,\qquad a(s)\,\to\, 4\pi \,(s\to 0)
\end{gather*}
and
\begin{align*}
  &\opt(a(s))-  4\pi \\
  \leq\,&
  \W(\Sigma_s)-\W(S^2) \\
  \leq\,&
  \sup\{|H(r)|^2:0<r<s\}\ar\big(\graph(\Psi|_{B_s^2(0)})\big)
  -  \ar(\graph|_{B_s^2(0)}(r\mapsto \sqrt{1-r^2}))\notag\\
  \leq\,& C(\eta) (a(s)-4\pi) +
  |2-C(\eta)|\ar(\graph|_{B_s(0)}(r\mapsto \sqrt{1-r^2}))\\
  \leq\, & C(\eta)\Big((a(s)-4\pi) + s^2\Big)\,\leq\, C(\eta)\sqrt{(a(s)-4\pi)}
\end{align*}
by \eqref{eq:asym-area}.
\step{6}
For $k\geq 2$ we follow the construction of a minimal sequence for
$a=4k\pi $ described in Section \ref{sec:proof}, except that we grow in
Step~5 of Section \ref{sec:proof} a slightly larger bump, such that the area of
the constructed surface just exceeds $4k\pi $.
\end{proof}
\section{Lower bound for $a$ close to $4\pi$}
\label{sec:lb-4pi}
By Corollary \ref{cor:est-below} we immediately obtain the lower estimate
\begin{gather}
	\opt(a)-\opt(4\pi)\,\geq\, a-4\pi \label{eq:lb-lin}
\end{gather}
for $a\geq 4\pi$. The upper bound in Proposition \ref{prop:A-above} on the other hand shows the square-root behavior $\opt(a)-\opt(4\pi)\,\leq\,C\sqrt{ a-4\pi}$. In this section we derive an improved lower bound with square-root type growth rate.

 The next Proposition gives a  a useful characterization of the area difference. In particular we see that there are no surfaces in $\M_a$ with area larger
than $4\pi$ that are $C^2$-close to the sphere, which gives a first hint to a change of behavior in the constrained optimization.
\begin{proposition}\label{prop:4pi-est-below}
For any $\Sigma\in\M_a$ 
\begin{gather}
  \ar(\Sigma) -4\pi\,=\, -\int_\Sigma \Big(1-(x\cdot\nu(x))^2 +
  \frac{1}{2}|x- (x\cdot\nu(x))\nu(x)|^2\Big) K(x)\,d\Ha^2(x) \label{eq:K-M}
\end{gather}
holds.
In particular, $K\geq 0$ on $\Sigma$ implies $\ar(\Sigma)\leq 4\pi$.
\end{proposition}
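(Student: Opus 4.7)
The plan is to reduce the identity to a cleaner symmetric formula via Gauss--Bonnet and then prove it by one integration by parts on $\Sigma$. Since $\Sigma$ is of sphere type, Gauss--Bonnet gives $\int_\Sigma K\,d\Ha^2 = 4\pi$, so subtracting this contribution from the right-hand side reduces \eqref{eq:K-M} to the equivalent statement
\begin{equation*}
\ar(\Sigma) \;=\; \int_\Sigma (x\cdot\nu)^2\,K\,d\Ha^2 \;-\; \tfrac12\int_\Sigma \bigl|x-(x\cdot\nu)\nu\bigr|^2\,K\,d\Ha^2.
\end{equation*}
I will abbreviate $\phi:=x\cdot\nu$ and $x^T:=x-\phi\nu$, and let $W$ denote the Weingarten endomorphism of $T_x\Sigma$ (symmetric, with eigenvalues $\kappa_1,\kappa_2$, trace $H$ and determinant $K$).

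To establish this reformulation I would introduce the tangential vector field $Y:=\phi(HI-W)x^T = \phi H x^T - \phi W x^T$ and compute $\dive_\Sigma Y$ using four ingredients:
\begin{enumerate}[(a)]
\item $\nabla_\Sigma\phi = W x^T$, since $\nabla_{e_j}(x\cdot\nu)=x\cdot We_j = e_j\cdot W x^T$;
\item $\dive_\Sigma x^T = 2 - \phi H$, obtained from $\dive_\Sigma x = 2$ and $\dive_\Sigma(\phi\nu) = \phi H$;
\item $\dive_\Sigma(W x^T) = \nabla_\Sigma H\cdot x^T + H - \phi(H^2 - 2K)$, from the trace Codazzi identity $\sum_i\nabla_i h_{ij}=\nabla_j H$ combined with $\nabla^\Sigma x^T = I - \phi W$;
\item the $2$D Cayley--Hamilton relation $W^2 = HW - K\,\id$, giving $|W x^T|^2 = H(W x^T\cdot x^T) - K|x^T|^2$.
\end{enumerate}
Expanding $\dive_\Sigma Y$ via the Leibniz rule, the $\phi\,\nabla H\cdot x^T$ and $\phi^2 H^2$ contributions cancel directly; after substituting Cayley--Hamilton for $|W x^T|^2$, the $H(W x^T\cdot x^T)$ terms also cancel, and I expect the clean expression
\begin{equation*}
\dive_\Sigma Y \;=\; \phi H \,-\, 2\phi^2 K \,+\, K|x^T|^2.
\end{equation*}

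Since $\Sigma$ is closed and $Y$ is tangential, $\int_\Sigma\dive_\Sigma Y\,d\Ha^2 = 0$; together with the first Minkowski identity $\int_\Sigma\phi H\,d\Ha^2 = 2\ar(\Sigma)$ (obtained by plugging $X(x)=x$ into the first variation formula), this yields the symmetric formula displayed above, and one more application of Gauss--Bonnet gives \eqref{eq:K-M}. The \emph{in particular} assertion then follows at once: on $\Sigma\subset\overline{B}$ one has $(x\cdot\nu)^2\leq|x|^2\leq 1$, so the weight $1-(x\cdot\nu)^2+\tfrac12|x^T|^2$ is pointwise nonnegative, and $K\geq 0$ forces $\ar(\Sigma)-4\pi\leq 0$.

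The hard part will be guessing the vector field $Y$. Simpler candidates such as $x^T$, $\phi x^T$, or $Wx^T$ alone produce divergences in which $K$ appears only linearly and is entangled with $\nabla H$, $H^2$ and $W x^T\cdot x^T$ terms that cannot be eliminated. The right ansatz exploits the fact that $HI-W = \cof W$ is the Cayley--Hamilton partner of $W$ (satisfying $(HI-W)W = K\,\id$), and pairing this with the scalar $\phi$ -- whose surface gradient is precisely $Wx^T$ -- is what triggers the cancellation that removes all $H$-dependence and leaves only $\phi H$, $\phi^2 K$ and $K|x^T|^2$. Once this ansatz is identified, the rest of the proof is routine bookkeeping.
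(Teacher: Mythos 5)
Your proof is correct: the four ingredients (a)--(d) all hold with the paper's sign conventions, and with them the Leibniz expansion of $\dive_\Sigma Y$ for $Y=\phi(H\id-W)x^T$ does collapse to $\phi H-2\phi^2K+K|x^T|^2$; integrating over the closed surface, using the Minkowski identity $\int_\Sigma \phi H\,d\Ha^2=2\ar(\Sigma)$ and then Gauss--Bonnet gives exactly \eqref{eq:K-M}, and your observation $(x\cdot\nu)^2\le|x|^2\le1$ on $\overline B$ settles the ``in particular'' part. The route differs from the paper's only in language, not in substance: the paper works with a moving frame and differential forms, quoting from Agricola--Friedrich the identity $d\omega=-H\sigma+2p_3K\sigma$ for $\omega=x\cdot(\nu\times d\nu)$ and applying Stokes to $d(p_3\omega)$, whereas you integrate the divergence of an explicitly constructed tangential field and verify everything from scratch via the traced Codazzi equation and Cayley--Hamilton. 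In fact the two computations are dual to one another: up to sign and orientation conventions, $Y$ is obtained from the $1$-form $p_3\omega$ by a $90^\circ$ rotation in each tangent plane (using $J W J^T=H\id-W$ for the rotation $J$), so $\dive_\Sigma Y\,d\Ha^2$ is precisely $d(p_3\omega)$, and both arguments pass through the same intermediate identity \eqref{eq:K-M-1} before subtracting $\int_\Sigma K=4\pi$. What your version buys is self-containedness -- no appeal to \cite{AgFr02}, only standard hypersurface identities -- at the price of having to guess the ansatz $Y=\phi\,(\cof W)x^T$, which, as you note, is the only nontrivial step; the paper's version is shorter because the key exterior-derivative formula is taken from the literature.
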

\begin{proof}
Let $\nu$ denote a smooth unit-normal field on $\Sigma$, and let $(e_1,e_2,e_3)=(\tau_1,\tau_2,\nu)$ be a smooth orthonormal frame on $\Sigma$. We define
$p_i(x):=x\cdot \tau_i$, $q:=\sqrt{p_1^2+p_2^2}$, $\eta(x):=x$, and $\omega_{ij} := e_j\cdot de_i$. For the
1-form $\omega:= \eta\cdot(\nu\times d\nu)= p_2\omega_{31}-p_1\omega_{32}$ we compute \cite[proof of Theorem 26]{AgFr02}
\begin{gather}
  d\omega\,=\, -H\sigma + 2p_3 K\sigma,\quad dp_3\,=\, \omega_{31}p_1 + \omega_{32}p_2,
  \label{eq:omega}
\end{gather}
where $\sigma$ denotes the volume form on $\Sigma$ (note that in \cite{AgFr02} the mean curvature is defined as $\frac{1}{2}$ times the trace of the Weingarten map, hence  the term $2H$ appears there instead of $H$). We thus obtain
\begin{gather*}
	d(p_3\omega)\,=\, dp_3\wedge \omega + p_3 d\omega \,=\, -q^2 K\sigma -p_3 H\sigma + 2p_3^2 K\sigma.
\end{gather*}
Integration over $\Sigma$ yields
\begin{gather}
  \int_\Sigma \big(p_3^2 - \frac{1}{2}q^2\big)K\,d\sigma \,=\,
  \frac{1}{2}\int_\Sigma p_3 H\sigma\,=\, \frac{1}{2}\int_\Sigma
  -x\cdot\vec{H}\sigma\,=\, \frac{1}{2}\int_\Sigma \dive_{\Sigma}\eta \sigma \,=\, \ar(\Sigma), \label{eq:K-M-1}
\end{gather}
where we have used in the last two equalities the classical divergence formula on smooth closed surfaces \cite[(7.6)]{Simo83} and $\dive_{\Sigma}\eta=2$ on $\Sigma$.\\
By the Gauss-Bonnet formula $\int_\Sigma K\,d\Ha^2\,=\, 4\pi$ and
substracting 
this identity from \eqref{eq:K-M-1} we obtain \eqref{eq:K-M}.
\end{proof}
The main result of this section is following improved lower bound.
\begin{theorem}\label{thm:lb-4pi}
There exists $c>0$ such that for all $\Sigma\in\M_a$, $a\geq 4\pi$ 
\begin{gather}
	\opt(a)-4\pi\,\geq\, c\sqrt{a-4\pi} \label{eq:lb-4pi}
\end{gather}
holds.
\end{theorem}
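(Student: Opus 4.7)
The plan is to combine Proposition~\ref{prop:4pi-est-below} with the nearly-umbilical rigidity estimates of De Lellis--Müller \cite{DeMu05,DeMu06}. Write $\epsilon := \W(\Sigma) - 4\pi$ and $\delta := a - 4\pi$; since $\opt(a)\geq a$ by Corollary~\ref{cor:est-below}, the desired inequality is already linear-in-$\delta$ for $\delta$ bounded away from zero, so I may restrict throughout to the regime in which both $\epsilon$ and $\delta$ are small. The identity $K = H^2/4 - |\mathring A|^2/2$ combined with Gauss--Bonnet $\int_\Sigma K\,d\Ha^2 = 4\pi$ yields $\|\mathring A\|_{L^2(\Sigma)}^2 = 2\epsilon$, so the traceless second fundamental form is small in $L^2$ whenever $\epsilon$ is small. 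By \cite{DeMu05,DeMu06}, after rescaling $\Sigma$ to area $4\pi$ there exist a point $c_\Sigma \in \R^3$ and a conformal parametrization $\psi:S^2\to\Sigma$ with $\|\psi - c_\Sigma - \mathrm{id}\|_{W^{2,2}(S^2)} \leq C\sqrt\epsilon$; Sobolev embedding yields the corresponding $C^0$ (Hausdorff) closeness. Unraveling the rescaling, $\Sigma$ is Hausdorff-close to a sphere $S(rc_\Sigma,r)$ of radius $r = \sqrt{a/(4\pi)}\geq 1$ at distance $\leq C\sqrt\epsilon$; the confinement $\Sigma \subset \overline B$ then forces both $|c_\Sigma|\leq C\sqrt\epsilon$ and $r - 1 \leq C\sqrt\epsilon$.

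Proposition~\ref{prop:4pi-est-below} together with the pointwise bound $K^- \leq |\mathring A|^2/2$ (immediate from $K = H^2/4 - |\mathring A|^2/2 \geq -|\mathring A|^2/2$) reduce the task to an integral estimate. Writing the weight equivalently as $w := 1 - (x\cdot\nu)^2 + \tfrac{1}{2}|x - (x\cdot\nu)\nu|^2 = (1-|x|^2) + \tfrac{3}{2}|x-(x\cdot\nu)\nu|^2$, which is visibly $\geq 0$ on $\Sigma \subset \overline{B}$, I obtain
\[
\delta \,=\, -\int_\Sigma w\,K\,d\Ha^2 \,\leq\, \int_\Sigma w\,K^-\,d\Ha^2 \,\leq\, \tfrac{1}{2}\int_\Sigma w\,|\mathring A|^2\,d\Ha^2.
\]
The square-root lower bound is thus reduced to showing the sharp estimate $\int_\Sigma w\,|\mathring A|^2\,d\Ha^2 \leq C\epsilon^2$.

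This final quadratic-in-$\epsilon$ bound is the main obstacle. Naively combining the $C^0$ rigidity estimate $\|w\|_{L^\infty}\leq C\sqrt\epsilon$ with $\|\mathring A\|_{L^2}^2 = 2\epsilon$ only yields $\delta \leq C\epsilon^{3/2}$ and the suboptimal exponent $\epsilon \geq c\,\delta^{2/3}$. To reach $\delta \leq C\epsilon^2$ one must exploit the full $W^{2,2}$-structure of the De Lellis--Müller parametrization together with the integrated identity $\int_\Sigma (1 - (x\cdot\nu)^2)\,d\Ha^2 \leq \W - a = \epsilon - \delta$ furnished by the proof of Theorem~\ref{thm:est-below}. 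Heuristically $w$ and $|\mathring A|^2$ are co-concentrated on a ``dimple region'' of area $\lesssim \epsilon$ dictated by confinement ($\Sigma$ is close to a sphere of radius $>1$ and must be pulled inside $\overline B$); on such a region $w$ is of order $\epsilon$ rather than $\sqrt\epsilon$, the ratio $\sqrt\epsilon$ being precisely the gain needed. Making this co-concentration quantitative---either by splitting $\Sigma$ according to thresholds of $|1 - x\cdot\nu|$ and combining $L^2$-bounds on $w$ with higher integrability of $|\mathring A|$ from curvature regularity, or via a compactness argument rescaling a hypothetical sequence $\Sigma_j$ with $\epsilon_j = o(\sqrt{\delta_j})$ by $1/\sqrt{\epsilon_j}$ and analyzing the limiting profile on $\R^2$---should produce the sharp estimate and close the proof.
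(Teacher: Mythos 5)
Your setup coincides with the paper's: reduce via Proposition~\ref{prop:4pi-est-below}, restrict to the regime where $\epsilon=\W(\Sigma)-4\pi$ is small using \eqref{eq:lb-lin}, invoke the De Lellis--M\"uller rigidity estimates, note the nonnegativity of the weight $w=(1-|x|^2)+\tfrac32|x-(x\cdot\nu)\nu|^2$ on $\overline B$, and observe that the naive combination of $\|w\|_{L^\infty}\le C\sqrt\epsilon$ with $\|\Atf\|_{L^2}^2=2\epsilon$ only gives the exponent $3/2$. But at exactly this point the proposal stops: the assertion $\int_\Sigma w\,|\Atf|^2\,d\Ha^2\le C\epsilon^2$ (equivalently, the $-C\epsilon^2$ lower bounds for the two terms in \eqref{eq:K-M-lambda}) is the entire content of the theorem, and you offer only a heuristic ``co-concentration'' picture plus two unimplemented suggestions. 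Neither suggestion survives scrutiny as stated: there is no ``higher integrability of $|\Atf|$ from curvature regularity'' available, since $\Sigma$ is an arbitrary competitor (not a critical point) and the only hypothesis is an $L^2$ bound on $\Atf$; and a rescaling/compactness argument has no obvious limiting profile to analyze, because the inequality to be proven compares two quantities ($\int w|\Atf|^2$ and $\epsilon^2$) that degenerate at different rates and the problem is not scale invariant inside $\overline B$. The paper's proof of the quadratic bound is precisely the hard part: a substitute for the failing embedding $W^{2,2}\hookrightarrow W^{1,\infty}$ (Lemma~\ref{eq:lemma-g3}) combined with a covering at scale $r\sim\sqrt\epsilon$ (Proposition~\ref{prop:est1.1}); the decomposition \eqref{eq:KdetF} isolating the Jacobian structure $F\cdot\partial_1F\times\partial_2F$; a degree-theoretic cancellation on a grid of squares of side $\sim\epsilon^2$ (Proposition~\ref{prop:degree}); a cofactor/divergence-form integration by parts with a vector field $h$ satisfying $|h(z)|\le C|z|^3$ (Proposition~\ref{prop:6}); and a Ladyzhenskaya-type estimate for the remainder (Proposition~\ref{prop:junk}). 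None of this, nor any workable replacement, appears in your argument.

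There is also a structural issue with your reduction itself. By discarding $K^+$ and aiming for $\int_\Sigma w\,K^-\le\tfrac12\int_\Sigma w\,|\Atf|^2\le C\epsilon^2$, you have strengthened the statement in a way that forfeits the terms the paper needs. In the paper's scheme the positive part of $K$ (via $K\ge\tfrac12-2|A-\id|^2$ and the ``$+|J\Psi|$'' term in \eqref{eq:KdetF}) produces the positive contributions $\tfrac12\int(1-\lambda|x|^2)$ and $\int|x-(x\cdot\nu)\nu|^2$ that absorb error terms of size $C c_0^{-2}\int(1-\lambda|x|^2)$ and $Cc_1^{-1}\int|x-(x\cdot\nu)\nu|^2$; these errors are of order $\epsilon$, not $\epsilon^2$, so without the positive part of $K$ they cannot be absorbed for any fixed choice of $c_0,c_1$, and your auxiliary identity $\int(1-(x\cdot\nu)^2)\le\epsilon-\delta$ does not repair this. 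So even granting the rest of your outline, the intermediate estimate you reduce to is not obtainable by a routine adaptation of the available tools and is left entirely unproven; as it stands the proposal establishes only $\opt(a)-4\pi\ge c\,(a-4\pi)^{2/3}$ in the small-energy regime, not \eqref{eq:lb-4pi}.
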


In the remainder of this section we prove Theorem \ref{thm:lb-4pi}. We first introduce some notation and recall rigidity estimates for nearly umbilical surfaces derived by De Lellis and Müller \cite{DeMu05,DeMu06}.

For $\Sigma\in \M_a$ let $g$ denote the first fundamental form of $\Sigma$,  $\nu$ the outer unit normal field, $A$ the second fundamental form, $A(v,w)=g(v,d\nu(w))$, $H=\tr(A)$. With this convention the unit sphere has $H=2$ and $A=\id$. Let further $\Atf$ denote the trace-free part of the second fundamental form,
\begin{gather*}
	\Atf(x)\,=\, A(x)-\frac{\tr{A(x)}}{2} \otimes g\,=\, A(x)-\frac{1}{2}H(x)\otimes g\qquad\text{ for }x\in\Sigma.
\end{gather*}
We have the relation
\begin{gather*}
	2|\Atf|^2\,=\, \kappa_1^2 +\kappa_2^2 -2\kappa_1\kappa_2 \,=\, H^2 -4 K.
\end{gather*}
The Gauss--Bonnet Theorem then implies that
\begin{gather}
	\W(\Sigma)-\W(S^2)\,=\, \frac{1}{4}\int_\Sigma H^2 \,d\Ha^2 -4\pi \,=\, \frac{1}{2}\int_\Sigma |\Atf|^2\,d\Ha^2. \label{eq:atf}
\end{gather}
By \cite[Theorem 1.1]{DeMu05}  for $\Sigma\in \M_{4\pi}$ with $\W(\Sigma)\leq 6\pi$ there exists a universal constant $C>0$ and  a conformal parametrization $\psi:S^2\to\Sigma$ such that after a suitable translation
\begin{gather}
	\|\psi -\id\|_{W^{2,2}(S^2)}\,\leq\, C\|\Atf\|_{L^2(\Sigma)}. \label{eq:DeMu05-org}
\end{gather}
Moreover, for the conformal factor $h:S^2\to\R^+$ given by $\psi_\sharp g=h^2\sigma$, $\sigma$ the standard metric on $S^2$, we have by \cite[Theorem 2]{DeMu06}
\begin{gather}
	\|h-1\|_{W^{1,2}(S^2)}+\|h-1\|_{C^0(S^2)}\,\leq\, C\|\Atf\|_{L^2(\Sigma)} \label{eq:DeMu06-org}
\end{gather}
for a universal constant $C>0$. Fixing such a parametrization $\psi$ we define
\begin{gather*}
	N:S^2\,\to\, S^2,\, N\,:=\, \nu\circ\psi.
\end{gather*}
Note that 
\begin{gather*}
	N(x)
	\,=\, \frac{d\psi(x)(\tilde{\tau}_1)\times d\psi(x)(\tilde{\tau}_2)}{|d\psi(x)(\tilde{\tau}_1)\times d\psi(x)(\tilde{\tau}_2)|},
\end{gather*}
where $(\tilde{\tau}_1,\tilde{\tau}_2,x)$ is an orthonormal basis of $\R^n$ in $x\in S^2$.\\
By \eqref{eq:DeMu05-org}, \eqref{eq:DeMu06-org} we deduce
\begin{gather}
	\|\psi-N\|_{W^{1,2}(S^2)}\,\leq\, C\|\Atf\|_{L^2(\Sigma)}. \label{eq:psiN-small-org}
\end{gather}
Around a point $x_0\in\Sigma$, $x_0=\psi(\xi_0)$ we often use a local parametrization of the following type. Denote by $\D_r:= B(0,r)\subset\R^2$ the open ball in $\R^2$ with radius $r>0$ and center $0$. Let $\sgp:S^2 \setminus\{-\xi_0\}\to\R^2$ denote the standard stereographic projection that maps $\xi_0$ to  the origin and the equator $S^2\cap \{\xi_0\}^\perp$ to $\partial\D_1\subset\R^2$. We then define
\begin{align*}
	\Psi\,&:\, \D_1\,\to\, \Sigma, &\Psi\,&:=\, \psi\circ \sgp^{-1},\\
	M\,&:\, \D_1\,\to\, S^2,& M\,&:=\, N\,\circ \sgp^{-1}\,=\, \nu\circ\Psi.
\end{align*}
We deduce from \eqref{eq:DeMu05-org}, \eqref{eq:DeMu06-org}, and \eqref{eq:psiN-small-org} that for $\Sigma\in \M_{4\pi}$ with $\W(\Sigma)\leq 6\pi$
\begin{align}
	\|\Psi -\sgp^{-1}\|_{W^{2,2}(\D_1)}\,&\leq\, C\|\Atf\|_{L^2(\Sigma)}, \label{eq:DeMu05}\\
	\||J\Psi|-|J\sgp^{-1}|\|_{C^0(\D_1)}\,&\leq\, C\|\Atf\|_{L^2(\Sigma)}, \label{eq:DeMu06}\\
	\|\Psi-M\|_{W^{1,2}(\D_1)}\,&\leq\, C\|\Atf\|_{L^2(\Sigma)}. \label{eq:psiN-small}	
\end{align}
Since $1-|\Psi|^2\,=\, (\sgp^{-1} - \Psi)\cdot (\sgp^{-1} + \Psi)$ and since $|\Psi|>\frac{1}{2}$ for $\|\Atf\|_{L^2(\Sigma)}$ sufficiently small this yields
\begin{align}
	\|1-|\Psi|^2\|_{W^{2,2}(\D_1)} + \|1-|\Psi|\|_{W^{2,2}(\D_1)} \,\leq\, C\|\Atf\|_{L^2(\Sigma)} \label{eq:Psi-1-small}
\end{align}
for $\|\Atf\|_{L^2(\Sigma)}$ sufficiently small.

In order to prove Theorem \ref{thm:lb-4pi} we fix $\Sigma_a\in \M_a$ with $a>4\pi$ and define
\begin{gather*}
	\delta\,:=\, \sqrt{\W(\Sigma_a)-4\pi}\,=\, \frac{\sqrt{2}}{2}\|\Atf\|_{L^2(\Sigma_a)}.
\end{gather*}
It is sufficient to prove \eqref{eq:lb-4pi} for all $\delta< \delta_0$, where $\delta_0>0$ is an arbitrary universal constant, since for $\delta\geq\delta_0$ by \eqref{eq:lb-lin}
\begin{gather*}
	\W(\Sigma_a)-4\pi \,\geq\, {\delta_0}\sqrt{a-4\pi}
\end{gather*}
holds. In the following we assume $\delta_0<\sqrt{2\pi}$, associate to $\Sigma_a$ the dilated surface $\Sigma=\sqrt{\frac{4\pi}{a}}\Sigma_a$ with $\ar(\Sigma)=4\pi$,  and let $\lambda={\frac{a}{4\pi}}$. By \cite{DeMu05,DeMu06} there exists a  conformal parametrization $\psi:S^2\to\Sigma$ with \eqref{eq:DeMu05-org}-\eqref{eq:Psi-1-small}. By choosing $\delta_0>0$ sufficiently small we can moreover assume that
\begin{gather}
	\frac{1}{2}\,\leq\, \lambda \,\leq \, {2}, \label{eq:est-pi-a}\\
	|\psi|\,\geq\, \frac{1}{2}, \label{eq:est-Psi1half}\\
	\frac{1}{2}\,\leq |J\Psi|\,\leq 5 \label{eq:est-JPsi}
\end{gather}
for any local parametrization $\Psi:\D_1\to\Sigma$ a above.

To derive the desired lower bound we use \eqref{eq:K-M} for $\Sigma_a$ and estimate the right-hand side of this inequality from above. We observe that
\begin{gather*}
	1-(x\cdot\nu(x))^2 \,=\, 1 - |x|^2 + |x- (x\cdot\nu(x))\nu(x)|^2
\end{gather*}
and reformulate \eqref{eq:K-M} in terms of the dilated surface $\Sigma$ as
\begin{align}
	&-\big(a-4\pi\big)\notag\\
	=\, & \int_{\Sigma} \big(1-\lambda |x|^2\big)K(x)\,d\Ha^2(x) + \frac{3\lambda}{2}\int_\Sigma \eta(x) \big|x-(x\cdot\nu(x))\nu(x)\big|^2K(x)\,d\Ha^2. \label{eq:K-M-lambda}
\end{align}
We have to show that both terms on the right-hand side are bounded from below by $-C\delta^4$.
\begin{remark-short}
 Let us first briefly outline the intuition behind the proof of these lower bounds.
For the second term on the right-hand side of \eqref{eq:K-M-lambda} the lower bound is easy if one has slightly stronger assumptions than \eqref{eq:DeMu05-org}-\eqref{eq:Psi-1-small}. Indeed since $K=\det A$ we get from \eqref{eq:DeMu05-org} and \eqref{eq:DeMu06-org} that $\LL^2(\{K\leq 0\})\leq C\delta^2$, while \eqref{eq:DeMu05}, \eqref{eq:DeMu06} imply that $\|\nu(x)-x\|_{L^q}\leq C_q\delta$ for all $q<\infty$. If we had an $L^\infty$ bound the lower bound $-C\delta^4$ for the second term would follow immediately. Now $W^{1,2}$ does not embed into $L^\infty$ but into $\BMO$, the space of functions of bounded mean oscillation. This space is dual to the Hardy space $\Hardy^1$ and since the Gauss curvature has the structure of a determinant one might expect that we can bound $K-1$ not only in $L^1$ but in $\Hardy^1$. One can, however, not rely directly on the $\BMO-\Hardy^1$ duality since, e.g., $\BMO$ is not an algebra and $\|f^2\|_{\BMO}$ cannot be estimated by $\|f\|_{\BMO}^2$. Instead, similar to \cite{DeMu05} one has to carefully approximate $|K-1||x-(x\cdot\nu(x))\nu(x)|^2$ in a way which preserves as much of the determinant structure as possible, see in particular \eqref{eq:KdetF}, \eqref{eq:split-2}, and Proposition \ref{prop:6}. For the first integral on the right-hand side of \eqref{eq:K-M-lambda} the estimate $\LL^2(\{K\leq 0\})\leq\, C\delta^2$, \eqref{eq:Psi-1-small}, and the Sobolev embedding $W^{2,2}\hookrightarrow L^\infty$ give immediately the lower bound $-C\delta^3$, but this bound has the wrong exponent $3$ instead of $4$. To get a better bound we exploit that $K\geq \frac{1}{2} - 2|A-\id|^2$ (see below) and that $f(x)=1-\lambda |x|^2$ has small oscillations on small balls. Indeed, if $W^{2,2}$ would embed into $W^{1,\infty}$ we knew that $f$ is Lipschitz with Lipschitz constant $C\delta$, hence that $\osc_{\D_r} f \leq C\delta r$. Now the embedding  from $W^{2,2}$ to $W^{1,\infty}$ again just fails, but we can use Lemma \ref{eq:lemma-g3} below as a substitute.
\end{remark-short}

We now start with the rigorous estimate of the integrals in \eqref{eq:K-M-lambda}. We use a partition of unity on $S^2$ and local parametrizations $\psi$ as described above. 
We then have to estimate expressions of the form
\begin{gather}
	\int_{\D_1} \eta(y) \big(1-\lambda|\Psi(y)|^2\big)K(\Psi(y)) |J\Psi(y)|\,dy \notag\\
	+ \frac{3\lambda}{2}\int_{\D_1} \eta(y) \big|\Psi(y)-(\Psi(y)\cdot M(y)) M(y)\big|^2K(\Psi(y)) |J\Psi(y)|\,dy \label{eq:split-integrand}
\end{gather}
from below, where $\eta$ is a smooth localization,
\begin{align}
	\eta\in C^\infty_c(\D_1),\quad 0\leq \eta\leq 1,\quad \|\eta\|_{C^1(\D_1)}\,\leq\, C. \label{eq:eta-localize}
\end{align}
We proceed in several steps.
\subsection{First term in \eqref{eq:split-integrand}}
In this subsection we prove the following Proposition.
\begin{proposition}\label{prop:est1.1}
There exists $C>0$ such that for any $\eta$ as in \eqref{eq:eta-localize}, $c_0>0$, and all $\delta<\delta_0$ sufficiently small
\begin{align}
	&\int_{\D_1} \eta(y) \big(1-\lambda |\Psi(y)|^2\big)K(\Psi(y)) |J\Psi(y)|\,dy \notag\\
	\geq\,& \frac{1}{2} \int_{\D_1} \eta \big(1-\lambda|\Psi|^2\big)|J\Psi| - \frac{C}{c_0^2} \int_{\D_1} (1-\lambda|\Psi|^2) - Cc_0\delta^4. \label{eq:prop-est1.1}
\end{align}
\end{proposition}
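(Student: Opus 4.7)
The plan is to exploit the pointwise algebraic inequality $K\ge \tfrac{1}{2}-2|A-\id|^2$ alluded to in the preceding remark. To derive it, expand
\[
K=\det A = \det\big(\id + (A-\id)\big) = 1 + \tr(A-\id) + \det(A-\id),
\]
and combine $|\tr B|\le \sqrt{2}|B|$ and $|\det B|\le \tfrac12|B|^2$ (both valid for $2\times 2$ matrices) with Young's inequality $\sqrt{2}|B|\le \tfrac12 + |B|^2$. Substituting into the left-hand side of \eqref{eq:prop-est1.1} produces the main term $\tfrac12\int\eta(1-\lambda|\Psi|^2)|J\Psi|$, so the proposition reduces to the upper bound
\[
E\;:=\;\int_{\D_1}\eta(y)(1-\lambda|\Psi(y)|^2)\,|A-\id|^2(\Psi(y))\,|J\Psi(y)|\,dy \;\le\; \frac{C}{c_0^{2}}\int_{\D_1}(1-\lambda|\Psi|^2)\,dy + C c_0\delta^{4}.
\]
Write $f(y):=1-\lambda|\Psi(y)|^2\ge 0$ (using $|\Psi|^2\le 1/\lambda$). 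From \eqref{eq:DeMu05}--\eqref{eq:DeMu06}, together with the Gauss formula and the conformality of $\psi$, one obtains $\int_{\D_1}|A-\id|^2(\Psi)|J\Psi|\,dy\le C\delta^{2}$.

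To bound $E$, I would introduce an intermediate scale $r>0$ and decompose
\[
f(y) = \big(f(y)-\bar f_r(y)\big)+\bar f_r(y), \qquad \bar f_r(y):=\dashint_{B_r(y)} f,
\]
where $B_r(y)\subset \R^2$ denotes the disk of radius $r$ centered at $y$. By \eqref{eq:Psi-1-small}, the shifted function $f-(1-\lambda)$ has $W^{2,2}(\D_1)$-norm at most $C\delta$. If the embedding $W^{2,2}\hookrightarrow W^{1,\infty}$ held in two dimensions, $f$ would be Lipschitz with constant $C\delta$ and satisfy $|f(y)-\bar f_r(y)|\le C\delta\, r$; this embedding fails only logarithmically, but Lemma~\ref{eq:lemma-g3} is designed precisely as a substitute and yields the same conclusion up to harmless logarithmic corrections absorbed into universal constants. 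Combined with the $L^2$-control on $|A-\id|$, the oscillation part contributes at most $C\delta^{3} r$. For the averaged piece, Fubini gives
\[
\int_{\D_1}\bar f_r(y)|A-\id|^2(\Psi)|J\Psi|(y)\,dy \;=\; \frac{1}{|\D_r|}\int f(y')\Big(\int_{B_r(y')}|A-\id|^2(\Psi)|J\Psi|\,dy\Big)\,dy' \;\le\; \frac{C\delta^{2}}{r^{2}}\int_{\D_1} f,
\]
using the uniform bound $C\delta^{2}$ on the inner integral. Thus $E\le C\delta^{3}r+\tfrac{C\delta^{2}}{r^{2}}\int_{\D_1}f$, and optimizing by $r=c_0\delta$ produces exactly $C c_0\delta^{4}+\tfrac{C}{c_0^{2}}\int_{\D_1}f$, as required.

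The main obstacle is the oscillation estimate: rigorously justifying $|f-\bar f_r|\lesssim \delta\, r$ despite the borderline failure of $W^{2,2}\hookrightarrow W^{1,\infty}$. This is exactly the role of Lemma~\ref{eq:lemma-g3}; the remaining ingredients (the Gauss-curvature algebraic identity, Fubini, and parameter balancing) are routine. A minor caveat is that $B_r(y)$ may exit $\D_1$ near the boundary, but this is harmless since $\eta$ is compactly supported in $\D_1$ by \eqref{eq:eta-localize}, so for $r$ sufficiently small only $y$ with $B_r(y)\subset \D_1$ contribute to $E$, and one may alternatively extend $f$ to $\R^2$ with a controlled $W^{2,2}$-norm.
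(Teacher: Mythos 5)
Your overall architecture coincides with the paper's: the expansion $K=\det A\geq \tfrac12-2|A-\id|^2$, localization at a scale $r$, the global smallness $\int_{\D_1}|A\circ\Psi-\id|^2|J\Psi|\leq C\delta^2$, and the balancing choice $r=c_0\delta$. The genuine gap is the oscillation step. You assert the pointwise bound $|f-\bar f_r|\leq C\delta\,r$ for $f=1-\lambda|\Psi|^2$, attribute it to Lemma \ref{eq:lemma-g3}, and dismiss the failure of $W^{2,2}\hookrightarrow W^{1,\infty}$ as ``harmless logarithmic corrections absorbed into universal constants''. Neither claim holds. Lemma \ref{eq:lemma-g3} is not a two-sided oscillation estimate: it bounds $\sup_{\D_r}f$ by $2\fint_{\D_r}f+Cr\|D^2f\|_{L^2(\D_r)}$, and the hypothesis $f\geq 0$ is essential there, since it is what controls the affine term $r|A_r|$ by the mean plus $Cr\|D^2f\|_{L^2}$; the lemma gives no control of $\osc_{\D_r}f$ by $C\delta r$. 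Indeed such a bound is false for general $W^{2,2}$ functions with $\|D^2f\|_{L^2}\leq C\delta$: the gradient lies only in $W^{1,2}\hookrightarrow L^p$, $p<\infty$, so the oscillation at scale $r$ can exceed $\delta r$ by an unbounded (logarithmic in $1/r$) factor. With $r=c_0\delta$ this would contaminate the error term by factors like $\log(1/\delta)$, which cannot be absorbed into $Cc_0\delta^4$ with a universal $C$ and would degrade the clean square-root bound of Theorem \ref{thm:lb-4pi}. So as written, the estimate of the oscillation part of $E$ by $C\delta^3 r$ is unjustified.

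The repair is to use the lemma exactly as stated, which is what the paper does: since $f\geq 0$ (as you note, because $|\Psi|^2\leq 1/\lambda$) and $\|D^2f\|_{L^2(\D_1)}\leq C\delta$ by \eqref{eq:Psi-1-small}, on each ball $\D_r(y^i)$ of a cover with bounded overlap one has the one-sided bound $\sup_{\D_r(y^i)}f\leq 2\fint_{\D_r(y^i)}f+C\delta r$. Multiplying by $\int_{\D_r(y^i)}\eta\vartheta_i|A\circ\Psi-\id|^2|J\Psi|$ (bounded globally by $C\delta^2$ for the mean term, kept local for the $C\delta r$ term) and summing over the cover reproduces exactly your two contributions $\frac{C\delta^2}{r^2}\int_{\D_1}f$ and $C\delta^3 r$; the factor $2$ in front of the mean is harmless because that term is in any case dumped into $\frac{C}{c_0^2}\int_{\D_1}(1-\lambda|\Psi|^2)$. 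Your Fubini computation for the averaged piece and the final optimization $r=c_0\delta$ are fine; only the pointwise control of $f-\bar f_r$ must be replaced by this one-sided, nonnegativity-based estimate.
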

In the remainder of this subsection we prove Proposition \ref{prop:est1.1}. We start by observing that
\begin{align*}
	\tr (A-\id)\,&\leq\, \sqrt{2} |A-\id| \,\leq\, \frac{1}{2} + |A-\id|^2,\\
	\det(A-\id)\,&\leq\, |A-\id|^2,
\end{align*}
which yields
\begin{gather*}
	K\,=\, \det A \,=\, \det (\id+A-\id)\,=\, 1 +\tr (A-\id) + \det (A-\id) \,\geq\, \frac{1}{2} - 2|A-\id|^2.
\end{gather*}
We therefore obtain for the the left-hand side of \eqref{eq:prop-est1.1}
\begin{align}
	&\int_{\D_1} \eta(y) \big(1-\lambda|\Psi(y)|^2\big)K(\Psi(y)) |J\Psi(y)|\,dy \notag\\
	=\,
  	&\frac{1}{2}\int_{\D_1} \eta(y) \big(1-\lambda|\Psi(y)|^2\big)|J\Psi(y)|\,dy \notag\\
	&- 2\int_{\D_1} \eta(y) \big(1-\lambda|\Psi(y)|^2\big)|A(\Psi(y))-\id|^2 |J\Psi(y)|\,dy \label{eq:1st-term1}
\end{align}
Below we will cover $\D_1$ by smaller balls and control the right-hand side by using the positive contribution from the first term and the smallness of $\|A-\id\|_{L^2(\Sigma)}$.
We need the following auxiliary result.
\begin{lemma}\label{eq:lemma-g3}
For any nonnegative $f\in W^{2,2}(\D_r)$, $0<r\leq1$, 
\begin{align}
	\sup_{\D_r}f \,&\leq\, 2\fint_{D_r}f + 2 Cr\|D^2f\|_{L^2(\D_r)} \label{eq:g3-2}
\end{align}
holds.
\end{lemma}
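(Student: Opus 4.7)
The plan is to isolate the best affine approximation of $f$ on $\D_r$ and control the higher-order remainder via the two-dimensional Sobolev embedding, then exploit $f\geq 0$ to bound the affine part. Set $\bar f := \fint_{\D_r} f$, $\overline{\nabla f} := \fint_{\D_r}\nabla f\in\R^2$, introduce the affine function $p(y) := \bar f + \overline{\nabla f}\cdot y$, and put $g := f-p$, so that $\fint_{\D_r} g = 0$, $\fint_{\D_r}\nabla g = 0$, and $D^2 g = D^2 f$.

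First, I would estimate $\|g\|_{L^\infty(\D_r)}$ in terms of $\|D^2 f\|_{L^2(\D_r)}$. Two successive Poincar\'e inequalities on $\D_r$ give
\begin{gather*}
\|\nabla g\|_{L^2(\D_r)} \,\leq\, Cr\,\|D^2 f\|_{L^2(\D_r)}, \qquad \|g\|_{L^2(\D_r)} \,\leq\, Cr\,\|\nabla g\|_{L^2(\D_r)} \,\leq\, Cr^2\,\|D^2 f\|_{L^2(\D_r)}.
\end{gather*}
Combined with the scaled two-dimensional Sobolev embedding
\begin{gather*}
\|u\|_{L^\infty(\D_r)} \,\leq\, C\bigl(r^{-1}\|u\|_{L^2(\D_r)} + \|\nabla u\|_{L^2(\D_r)} + r\|D^2 u\|_{L^2(\D_r)}\bigr)
\end{gather*}
applied to $u=g$, and using $D^2 g=D^2 f$, this yields $\|g\|_{L^\infty(\D_r)} \leq Cr\,\|D^2 f\|_{L^2(\D_r)}$.

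The sign hypothesis now converts into a bound on the affine slope. For every $y\in\D_r$ one has $p(y) = f(y) - g(y) \geq -\|g\|_{L^\infty(\D_r)}$, so
\begin{gather*}
\min_{\D_r} p \,=\, \bar f - r|\overline{\nabla f}| \,\geq\, -Cr\,\|D^2 f\|_{L^2(\D_r)},
\end{gather*}
which rearranges to $r|\overline{\nabla f}| \leq \bar f + Cr\,\|D^2 f\|_{L^2(\D_r)}$. Combining with
\begin{gather*}
\sup_{\D_r} f \,\leq\, \sup_{\D_r} p + \|g\|_{L^\infty(\D_r)} \,=\, \bar f + r|\overline{\nabla f}| + \|g\|_{L^\infty(\D_r)}
\end{gather*}
then gives exactly \eqref{eq:g3-2}.

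There is no genuinely delicate step. The one point worth flagging is why the detour through the affine fit $p$ is needed: applying Sobolev and Poincar\'e directly to $f-\bar f$ would leave a $\|\nabla f\|_{L^2(\D_r)}$ term that cannot be absorbed into $r\|D^2 f\|_{L^2(\D_r)}$ without boundary information, and it is precisely the normalization $\fint_{\D_r}\nabla g = 0$ that removes this obstruction.
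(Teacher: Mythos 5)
Your proposal is correct and follows essentially the same route as the paper: both subtract the affine fit $\fint f + \big(\fint\nabla f\big)\cdot y$, bound the remainder in $L^\infty$ by $Cr\|D^2f\|_{L^2(\D_r)}$ via Poincar\'e and the Sobolev embedding (with the scaling to $r=1$ made explicit in the paper), and then use $f\geq 0$ to trade the slope term $r\,\big|\fint\nabla f\big|$ for the mean plus the remainder. The only cosmetic difference is that the paper phrases the sign step as $0\leq\inf_{\D_r}f\leq a_r-r|A_r|+Cr\|D^2f\|_{L^2(\D_r)}$ rather than bounding $\min p$ from below, which is the same inequality.
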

\begin{proof}
Set $a_r:=\fint_{\D_r}f$, $A_r:=\fint_{\D_r}\nabla f$ and define $h(y):=f(y)-a_r -A_r \cdot y$. We first prove
\begin{align}
	\|h\|_{L^\infty(\D_r)} \,&\leq\, Cr\|D^2f\|_{L^2(\D_r)}. \label{eq:g3-1}
\end{align}
Since the estimate is invariant under the rescaling $f_r(y)=f(ry)$ it is sufficient to prove the claim for $r=1$. We obtain by the Poincar{\'e} inequality
\begin{align*}
	\|\nabla h\|_{L^2(\D_1)}\,=\, \|\nabla f - \fint\nabla f \|_{L^2(\D_1)} &\leq\,  C\|D^2f\|_{L^2(\D_1)},\\
	\|h\|_{L^2(\D_1)}\,=\, \| h - \fint h \|_{L^2(\D_1)} &\leq\,  C\|\nabla h\|_{L^2(\D_1)}
\end{align*}
and deduce that $\|h\|_{W^{2,2}(\D_1)}\leq C\|D^2f\|_{L^2(\D_1)}$. By the Sobolev embedding Theorem we deduce \eqref{eq:g3-1}. 

Next we obtain from \eqref{eq:g3-1}
\begin{align*}
	\sup_{\D_r} f \,&=\, \sup_{y\in \D_r} \Big(a_r + A_r\cdot y + h(y)\Big)\,\leq\, a_r+r|A_r|+ \|h\|_{L^\infty(\D_r)}\\
	&\leq\, a_r+r|A_r|+Cr\|D^2f\|_{L^2(\D_r)}
\end{align*}
and
\begin{align*}
	0\leq \inf_{\D_r} f \,&=\, \inf_{y\in \D_r} \Big(a_r + A_r\cdot y + h(y)\Big)\,\leq\, a_r + A_r\cdot \frac{-rA(r)}{|A(r)|} + \sup_{\D_r} |h(y)|\\
	& \leq\, a_r -r|A_r| + Cr\|D^2f\|_{L^2(\D_r)}.
\end{align*}
Combining both  inequalities \eqref{eq:g3-2} follows.
\end{proof}
\begin{proof}[Proof of Proposition \ref{prop:est1.1}]
There exists a universal constant $C_B\in\N$ and a finite partition of unity $1=\sum_{i=1}^N \vartheta_i$ on $\D_1$ such that 
\begin{gather*}
	\nbr\{1\leq i\leq N\,:\, y\in\spt(\vartheta_i)\}\leq C_B \quad\text{ for all }y\in \D_1
\end{gather*}
and such that $0\leq \vartheta_i\leq 1$ for all $i=1,\dots,N$ and $\vartheta_i\in C^\infty(\D_r(y^i))$ for $r=c_0\delta$ as chosen below.

We apply the previous lemma to the function $f:=\big(1-\lambda |\Psi|^2\big)$. By \eqref{eq:DeMu05} 
\begin{gather}
	\|\Psi-\sgp^{-1}\|_{W^{2,2}(\D_1)} \,\leq\,C\delta \label{eq:g3-3}
\end{gather}
holds, we obtain  $f\in W^{2,2}(\D_1)$, and using \eqref{eq:Psi-1-small}
\begin{gather}
	\|D^2 f\|_{L^{2}(\D_1)}\,\leq\, C\delta. \label{eq:g3-4}
\end{gather}
Since $\lambda\leq 2,|\Psi|\leq 1$ we deduce from \eqref{eq:g3-1}
\begin{align}
	\sup_{ \D_r} (1-\lambda|\Psi|^2) \,\leq\, 2 \fint_{\D_r}  (1-\lambda |\Psi|^2) + 2Cr\delta. \label{eq:g3-99}
\end{align}	
This yields for all $r<1$ the estimate
\begin{align*}
	 &\int_{\D_r(y^i)} \eta\vartheta_i(1-\lambda |\Psi|^2)|A\circ\Psi-\id|^2|J\Psi|\\
	\leq &\, 2\Big( \fint_{\D_r(y^i)} (1-\lambda|\Psi|^2) + C\delta r\Big) \int_{\D_r(y^i)} \eta\vartheta_i|A\circ\Psi-\id|^2|J\Psi|\\
	\leq &\, \frac{2}{\pi r^2} \Big(\int_{\D_r(y^i)} (1-\lambda|\Psi|^2)\Big)\delta^2 + 2C\delta r \int_{\D_r(y^i)}\eta \vartheta_i|A\circ\Psi-\id|^2|J\Psi|.
\end{align*}
We deduce from \eqref{eq:1st-term1}
\begin{align*}
	&\int_{\D_1} \eta(y) \big(1-\lambda|\Psi(y)|^2\big)K(\Psi(y)) |J\Psi(y)|\,dy \notag\\
	\geq\, 
  	&\frac{1}{2}\int_{\D_1} \eta\big(1-\lambda |\Psi|^2\big)|J\Psi| - 2\sum_{i=1}^N \int_{\D_1} \eta\vartheta_i \big(1-\lambda |\Psi|^2\big)|A(\Psi)-\id|^2 |J\Psi|\\
	\geq\,& \frac{1}{2} \int_{\D_1} \eta \big(1-\lambda|\Psi|^2\big)|J\Psi| - \frac{C_B \delta^2}{\pi r^2} \Big(\int_{\D_1} (1-\lambda|\Psi|^2)\Big) - 2C\delta r \int_{\D_1} \eta |A\circ\Psi-\id|^2|J\Psi|
\end{align*}
By choosing $r= c_0\delta$ we obtain \eqref{eq:prop-est1.1}.
\end{proof}
\subsection{Second term in \eqref{eq:split-integrand}}
As in this term $\lambda$ only appears as a constant prefactor and since $\frac{1}{2}\leq \lambda\leq 2$ we drop the factor $\lambda$ in the following.
We first show that
\begin{gather*}
	\big(K\circ\Psi -1\big) |J\Psi| \,=\, M\cdot\partial_1 M \times \partial_2 M - M\cdot \partial_1\Psi\times\partial_2\Psi
\end{gather*}
can be well approximated by a term which preserves the determinat structure plus an extra error term which is more regular, i.e., in $L^q$ rather than in $L^1$, $q<2$.

For $\Psi:\D_1\to \Sigma$ as above we set $e_3:=\frac{\Psi}{|\Psi|}$. Then $e_3\in W^{2,2}(\D_1)$ and there exist $e_1,e_2\in W^{2,2}(\D_1)$ such that $(e_1(y),e_2(y),e_3(y))$ is an orthonormal basis of $\R^3$ for all $y\in \D_1$. We then define
\begin{align*}
	F_i\,&:=\, M\cdot e_i,\quad i=1,2,3,\\ F\,&:=\, (F_1,F_2,F_3)^T\in S^2,\\
	F'\,&:=\, (F_1,F_2)^T
\end{align*}
and observe that
\begin{align}
	F_i\,=\, (M-\Psi)\cdot e_i\quad\text{ for }i=1,2, \notag\\
	|\Psi - (\Psi\cdot M)M|^2\,=\, |\Psi|^2|F'|^2. \label{eq:Psi-tan}
\end{align}
By \eqref{eq:psiN-small} we have, using $\|fg\|_{W^{1,2}(\D_1)}\leq C\|f\|_{W^{1,2}(\D_1)}\|g\|_{W^{2,2}(\D_1)}$,
\begin{gather}
	\int_{\D_1} |F'|^2 + \int_{\D_1} |\nabla F'|^2\,\leq\, C\delta^2. \label{eq:F-small}
\end{gather}
Furthermore $(M-\Psi)\cdot e_3\,=\, F_3 -|\Psi|$ and 
\begin{align}
	\partial_i F_3 \,=\, \partial_i \big((M-\Psi)\cdot e_3\big) +\frac{1}{|\Psi|}\Psi\cdot\partial_i\Psi
\end{align}
holds and we obtain from \eqref{eq:psiN-small}, \eqref{eq:Psi-1-small} that
\begin{gather}
	\int_{\D_1} |\partial_i F_3|^2 \,\leq\, C\delta^2. \label{eq:F3-small}
\end{gather}
We further compute
\begin{gather}
	\partial_i (M-\Psi)\,=\, \sum_{j=1}^3 (\partial_i F_j) e_j +R_i^{(1)},\quad R_i^{(1)}\,:=\, \sum_{j=1}^3 F_j\partial_i e_j -\partial_i\Psi \label{eq:del-m-psi}
\end{gather}
and claim that
\begin{gather}
	\|R_i^{(1)}\|_{W^{1,p}(\D_1)}\,\leq\, C_p\delta,\quad i=1,2\quad\text{ for all }1\leq p<2. \label{eq:Ri-small}
\end{gather}
In fact,
\begin{gather*}
	\sum_{j=1}^3 F_j\partial_i e_j -\partial_i\Psi\,=\,  F_1\partial_i e_1 + F_2\partial_i e_2 +\big( F_3\partial_i\frac{\Psi}{|\Psi|} - \partial_i\Psi\big).
\end{gather*}
The estimate for the first two terms on the right-hand side follows from \eqref{eq:F-small} and the embedding $W^{1,2}(\D_1)\hookrightarrow L^q(\D_1)$ for all $1\leq q<\infty$, whereas the third term can first be written as
\begin{align*}
	F_3\partial_i\frac{\Psi}{|\Psi|} - \partial_i\Psi\,&=\, \frac{1}{|\Psi|}(F_3-|\Psi|)\partial_i\Psi - \frac{F_3}{|\Psi|^3}\Psi\cdot\partial_i\Psi\\
		\,&=\, \frac{1}{|\Psi|}(M-\Psi)\cdot e_3\partial_i\Psi +\frac{F_3}{2|\Psi|^3}\partial_i(1-|\Psi|^2).
\end{align*}
The estimate then follows from \eqref{eq:psiN-small}, \eqref{eq:Psi-1-small} and the embedding $W^{1,2}(\D_1)\hookrightarrow L^q(\D_1)$.

We next write using \eqref{eq:del-m-psi}
\begin{align}
	M\cdot \partial_1 (M-\Psi) \times \partial_2 (M-\Psi) \,&=\, M\cdot \sum_{j=1}^3 (\partial_1 F_j)e_j \times \sum_{k=1}^3 (\partial_2 F_k)e_k + M\cdot R^{(1)} \notag\\
	\,&=\, F\cdot\partial_1 F \times \partial_2 F + M\cdot R^{(1)},\label{eq:41-1}
\end{align}
with
\begin{align*}
	R^{(1)}\,:=\, &\Big(\sum_{j=1}^3 (\partial_1 F_j) e_j \times R_2^{(1)}\Big) + \Big(R_1^{(1)}\times \sum_{j=1}^3 (\partial_2 F_j) e_j \Big)+ \Big(R_1^{(1)}\times R_2^{(1)}\Big).
\end{align*}
The estimates \eqref{eq:F-small}, \eqref{eq:F3-small}, and \eqref{eq:Ri-small} imply that for all $1\leq q<2$ there exists $C_q>0$ such that
\begin{gather}
	\|R^{(1)}\|_{L^q(\D_1)}\,\leq\, C_q \delta^2. \label{eq:R1-small}
\end{gather}
Furthermore we observe that $M\cdot \partial_1\Psi\times\partial_2\Psi \,=\, |\partial_1\Psi\times\partial_2\Psi|\,=\, |J\Psi|$ and thus
\begin{align}
	&K\circ\Psi |J\Psi| \notag\\
	=\, &M\cdot \partial_1 M \times\partial_2 M \notag\\
	=\, &M\cdot \partial_1(M-\Psi)\times \partial_2 (M-\Psi) \notag\\
	&+ M\cdot \big(\partial_1\Psi\times \partial_2 (M -\Psi) +\partial_1(M-\Psi)\times\partial_2\Psi\big) + |J\Psi| \notag\\
	=\, &F\cdot\partial_1 F \times \partial_2 F +R + |J\Psi|, \label{eq:KdetF}
\end{align}
where by \eqref{eq:41-1}
\begin{gather}
	R\,:=\,  M\cdot R^{(1)}+ M\cdot \big(\partial_1\Psi\times \partial_2 (M -\Psi) +\partial_1(M-\Psi)\times\partial_2\Psi\big). \label{eq:def-R}
\end{gather}
The main point is that $F$ has values in $S^2$ and $F\cdot \partial_1F\times\partial_2 F$ is just the pull-back of the volume form on $S^2$, so that $F\cdot \partial_1F\times\partial_2 F$ is essentially a two-dimensional determinant (see \eqref{eq:64bis}). If instead we directly expand $M\cdot\partial_1 M\times\partial_2 M$ by setting $M=\Psi+(M-\Psi)$ we get a term $(M-\Psi)\cdot\partial_1 (M-\Psi)\times\partial_2 (M-\Psi)$ which has no such interpretation.

For the following calculations it is convenient to treat the cases $|F_3|$ small and $|F_3|$ close to one differently. We therefore introduce a cut-off function $\ctf$ acting on the values of $F_3^2$, 
\begin{align}
	\ctf \in C^\infty_c(0,1],\quad 0\leq \ctf\leq 1,\quad \ctf|_{[\frac{3}{4},1]}=1,\quad \ctf|_{[0,\frac{1}{2}]}=0. \label{eq:ctf-theta}
\end{align}
Using \eqref{eq:Psi-tan} we then rewrite the second term in \eqref{eq:split-integrand} as
\begin{align}
	&\int_{\D_1} \eta \big|\Psi(y)-(\Psi(y)\cdot M(y)) M(y)\big|^2 K\circ \Psi |J\Psi| \notag\\
	=\, &\int_{\D_1} \eta |\Psi|^2 |F'|^2 \,\big(1-\ctf(F_3^2)\big) F\cdot\partial_1 F \times \partial_2 F \notag\\
	&+ \int_{\D_1}\eta |\Psi|^2 |F'|^2  \ctf(F_3^2) F\cdot\partial_1 F \times \partial_2 F+ \int_{\D_1} \eta |\Psi|^2 |F'|^2 \big( R + |J\Psi|\big) . \label{eq:split-2}
\end{align}
We treat the three terms on the right-hand side separately.
\subsubsection{First term on the right-hand side of \eqref{eq:split-2}}
\begin{proposition}\label{prop:degree}
For $\delta_0>0$ sufficiently small we have
\begin{gather}
	\int_{\D_1} \eta |\Psi|^2 |F'|^2 \big(1-\ctf(F_3^2)\big) F\cdot\partial_1 F \times \partial_2 F \,\geq\, -C\delta^4.
\end{gather}
\end{proposition}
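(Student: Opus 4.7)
The integrand is supported on the set $U := \{y \in \D_1 : F_3(y)^2 \leq 3/4\}$; since $|F'|^2 = 1 - F_3^2 \geq 1/4$ on $U$, estimate \eqref{eq:F-small} yields $|U| \leq 4 \int_U |F'|^2 \leq C\delta^2$. A direct pointwise bound $|F \cdot \partial_1 F \times \partial_2 F| \leq \tfrac{1}{2}|\nabla F|^2$ combined with \eqref{eq:F-small}--\eqref{eq:F3-small} only gives $|I| \leq C\delta^2$, which is insufficient. The improvement to $\delta^4$ must exploit the null-Lagrangian structure $F \cdot \partial_1 F \times \partial_2 F\, dy_1 \wedge dy_2 = F^*\omega_{S^2}$.

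The plan is to construct a primitive $\alpha$ on $S^2 \setminus \{S\}$ of the weighted area form. Set $\tilde G(s) := (1-s^2)(1-\ctf(s^2))$, which vanishes on $[-1,-\sqrt{3}/2]\cup[\sqrt{3}/2, 1]$. In azimuthal coordinates $(p_3, \theta)$ with $\omega_{S^2} = dp_3 \wedge d\theta$, take $\alpha := \tilde H(p_3)\, d\theta$ with $\tilde H(s) := \int_{\sqrt{3}/2}^s \tilde G(t)\, dt$, so that $d\alpha = \tilde G(p_3)\omega_{S^2}$ and $\tilde H \equiv 0$ on $[\sqrt{3}/2, 1]$ while $\tilde H \equiv -C_0$ on $[-1,-\sqrt{3}/2]$ with $C_0 := \int_{-\sqrt{3}/2}^{\sqrt{3}/2}\tilde G > 0$. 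Thus $\alpha$ extends smoothly past the north pole but retains a residual singularity $-C_0\, d\theta$ at the south pole. By a genericity argument (justified since $F \in W^{2,2}\hookrightarrow C^0$ with $\|F - e_3\|_{L^2}\leq C\delta$, so the isolated set $F^{-1}(S)$ can be emptied by an arbitrarily small rotation of the ambient frame), I may assume $F$ avoids $S$, so $F^*\alpha$ is smooth on $\D_1$. It is supported in $V := \{F_3 < \sqrt{3}/2\}$, with $|V|\leq C\delta^2$ (combining \eqref{eq:F-small} on $\{F_3 \geq -1/2\}$ with Chebyshev on $\{F_3 < -1/2\}\subset\{|M - e_3|>1/2\}$ via \eqref{eq:psiN-small} and \eqref{eq:Psi-1-small}), and satisfies $|F^*\alpha(y)| \leq C|\nabla F'(y)|/|F'(y)|$. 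Integration by parts (using the compact support of $\eta$) then gives
$$
I = -\int_{\D_1} d(\eta|\Psi|^2) \wedge F^*\alpha.
$$

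Decomposing $d(\eta|\Psi|^2) = d\eta - (1-|\Psi|^2)\,d\eta - \eta\, d(1-|\Psi|^2)$ and using \eqref{eq:Psi-1-small} for the $C^0$- and $W^{1,2}$-smallness of $1-|\Psi|^2$, the contributions of the last two terms give $O(\delta^3)$ or better via Cauchy--Schwarz on $V$, $\|\nabla F'\|_{L^2}\leq C\delta$, and $|V|\leq C\delta^2$. The main obstacle is bounding the remaining term $-\int d\eta \wedge F^*\alpha$ by $O(\delta^4)$: naive Cauchy--Schwarz only yields $|V|^{1/2}\|\nabla F'/|F'|\|_{L^2(V)}\leq C\delta^2$, missing a factor of $\delta^2$. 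To close this gap I would invoke the Coifman--Lions--Meyer--Semmes Hardy-space bound
$$
\|F \cdot \partial_1 F \times \partial_2 F\|_{\Hardy^1(\D_1)} \leq C\|\nabla F\|_{L^2}^2 \leq C\delta^2,
$$
pairing it against a $\BMO$-bound for the test function; however, as the remark preceding \eqref{eq:eta-localize} warns, direct $\BMO$--$\Hardy^1$ duality applied to $\tilde G(F_3)\eta|\Psi|^2$ fails because $\BMO$ is not an algebra. Following the reorganization scheme of \eqref{eq:KdetF}--\eqref{eq:split-2}, one must rewrite the integrand so as to preserve the determinantal structure while retaining Sobolev smallness of the remainders. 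This careful algebraic manipulation, together with the handling of the south-pole singularity of $\alpha$, is the principal technical work.
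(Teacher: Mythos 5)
Your proposal does not reach the stated bound; it stops exactly where the real work begins. After the reduction via a primitive $\alpha$ of the weighted area form, you concede that the surviving term $-\int d\eta\wedge F^*\alpha$ can only be estimated by $C\delta^2$ (a BMO--$\Hardy^1$ pairing would at best give $C\delta^3$, since $\||F'|^2\|_{\BMO}\leq C\delta$ while $\|F\cdot\partial_1F\times\partial_2F\|_{\Hardy^1}\leq C\delta^2$), and you explicitly defer both the missing factor of $\delta^2$ and the south-pole singularity of $\alpha$ as ``the principal technical work.'' That deferred work is precisely the content of the proposition, so this is a genuine gap, not a presentational one. Moreover the step you use to make $F^*\alpha$ smooth is flawed: the uniform estimates only control $\|\nabla F\|_{L^2(\D_1)}\leq C\delta$ (via \eqref{eq:F-small}, \eqref{eq:F3-small}); there is no $W^{2,2}$ bound on $F$, so no quantitative continuity, and ``rotating the ambient frame'' is not available because $e_3=\Psi/|\Psi|$ is dictated by the geometry --- rotating it changes $F_3$ and $|F'|^2$ and hence the very integral you are estimating.

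The paper's mechanism supplies the missing $\delta^2$ quite differently and avoids any global primitive. One covers $\spt\eta$ by squares $Q_j$ of side $1/k$ with $k\sim\delta^{-4}$, choosing the grid by a Fubini argument so that $\int_{\partial Q_j}|\nabla F'|^2\leq Ck\delta^2$; then $\osc_{\partial Q_j}F'\leq C\delta$, and Lemma \ref{lem:bdryQ} (using \eqref{eq:set-F-small}) forces $F_3^2>\tfrac34$ on every $\partial Q_j$. Hence $\deg(F,Q_j,\cdot)$ is constant on $\{\xi_3^2<\tfrac34\}$ and must vanish (a nonzero degree would make $F(Q_j)$ cover a cap of fixed area, contradicting $\int_{Q_j}|DF|^2\leq C\delta^2$), which gives the exact cancellation $\int_{Q_j}F^*(g\sigma)=0$ as in \eqref{eq:deg-g-0} --- the local analogue of your null-Lagrangian identity, but with no singular primitive to handle. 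The factor $\eta|\Psi|^2$ is then frozen at a point of $Q_j$; its $C^{0,1/2}$ modulus (from \eqref{eq:Psi-1-small} and $\Lip(\eta)$) contributes $k^{-1/2}\sim\delta^2$, and summing $Ck^{-1/2}\int_{Q_j}|DF|^2$ over $j$ gives $C\delta^2\cdot\delta^2=C\delta^4$ as in \eqref{eq:DF-alpha}. In short: the gain comes from the degree-zero identity on a grid fine enough that the slowly varying weight oscillates by only $O(\delta^2)$ per square, an ingredient absent from your argument.
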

As $\eta$ is compactly supported in $\D_1$ we may extend $\Psi$ to a $W^{2,2}$-map $\Psi:\R^2\to \R^3$. We further consider the square $Q=[-1,1]^2$. For $k\in\N$ fixed it follows from \eqref{eq:F-small} that 
\begin{align*}
	\sum_{j=-k}^k \int_0^{\frac{1}{k}}\int_{-1}^1 |\nabla F'|^2(y_1+\frac{j}{k},y_2)\,dy_2\,dy_1 \,&\leq\, {C}{}\delta^2, \\ 
	\sum_{j=-k}^k \int_0^{\frac{1}{k}}\int_{-1}^1 |\nabla F'|^2(y_1,y_2+\frac{j}{k})\,dy_1 dy_2\,&\leq\, {C}{}\delta^2. 
\end{align*}
Therefore we can choose $a\in [0,\frac{1}{k}]^2$ such that
\begin{align}
	\sum_{j=-k}^k \int_{-1}^1 |\nabla F'|^2(a_1+\frac{j}{k},y_2)\,dy_2 \,&\leq\, {C}{k}\delta^2, \label{eq:a1}\\ 
	\sum_{j=-k}^k \int_{-1}^1 |\nabla F'|^2(y_1,a_2+\frac{j}{k})\,dy_1 \,&\leq\, {C}{k}\delta^2. \label{eq:a2}
\end{align}
Let $Q_j$, $j\in\N$ denote an enumeration of the squares with edge length $\frac{1}{k}$ and corners in the set $\{a+\frac{1}{k}\Z^2\}$ such that $\spt (\eta)\subset \bigcup_{j=1,\dots,N} Q_j$, $N\leq 5k^2$. By \eqref{eq:a1}, \eqref{eq:a2} we have 
\begin{gather}
	\int_{\partial Q_j} |\nabla F'|^2 \,\leq\, Ck\delta^2\quad\text{ for all }j=1,\dots, N. \label{eq:a12}
\end{gather}
In particular, for $\delta_0>0$ small enough we estimate
\begin{align}
	\osc_{\partial Q_j} F' \,&\leq\, C|\partial Q_j|^{\frac{1}{2}} \Big(\int_{\partial Q_j} |\nabla F'|^2\Big)^\frac{1}{2} \notag\\
	&\leq\, C \frac{1}{\sqrt{k}}\sqrt{k}\delta\,\leq\, C\delta. \label{eq:osc}
\end{align}
Furthermore we obtain from \eqref{eq:F-small} that $\int_{Q_j} |F'|^p \leq C_p \delta^p$ for all $1\leq p<\infty$ and
\begin{gather}
	|\big\{F_3^2\leq \frac{5}{6}\big\}\cap Q_j|\,=\, |\big\{|F'|^2 \geq \frac{1}{6}\big\}\cap Q_j|\,\leq\, C_p \int_{Q_j} |F'|^p\,\leq\, C_p\delta^p. \label{eq:set-F-small}
\end{gather}
\begin{lemma}\label{lem:bdryQ}
There exist $\delta_0>0$ and constants $\bar{C}_p>0$, $1\leq p<\infty$ such that for any $0<\delta<\delta_0$, $k\in\N$, and $1\leq p<\infty$ with $\frac{1}{k^2}\geq \bar{C}_p\delta^p$ the inequality
\begin{gather}
	F_3^2\,>\, \frac{3}{4}\quad\text{ on }\partial Q_j \label{eq:lembdryQ}
\end{gather}
holds.
\end{lemma}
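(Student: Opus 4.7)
The key observation is that $F\in S^2$ gives $F_3^2 = 1 - |F'|^2$, so the claim $F_3^2 > 3/4$ on $\partial Q_j$ reduces to showing $|F'| < 1/2$ on $\partial Q_j$. Since \eqref{eq:osc} already gives $\osc_{\partial Q_j} F' \leq C\delta$, it suffices to exhibit a single point $y^* \in \partial Q_j$ at which $|F'(y^*)|$ is a definite amount smaller than $1/2$.

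To obtain such a point I would strengthen the Fubini-type choice of $a \in [0,1/k]^2$ made just before \eqref{eq:a1}, \eqref{eq:a2}. The Sobolev embedding $W^{1,2}(\D_1) \hookrightarrow L^p(\D_1)$ combined with \eqref{eq:F-small} yields $\|F'\|_{L^p(\D_1)}^p \leq C_p \delta^p$ for any $p < \infty$. Averaging in $a$ over $[0,1/k]^2$ in exactly the same manner as for \eqref{eq:a1}, \eqref{eq:a2}, I can impose the additional requirements
\begin{align*}
\sum_{j=-k}^k \int_{-1}^1 |F'|^p(a_1 + \tfrac{j}{k}, y_2)\,dy_2 &\leq C_p k\delta^p, \\
\sum_{j=-k}^k \int_{-1}^1 |F'|^p(y_1, a_2 + \tfrac{j}{k})\,dy_1 &\leq C_p k\delta^p.
\end{align*}
Satisfying all three Fubini conditions simultaneously is a routine Chebyshev argument: the exceptional set in $[0,1/k]^2$ for each condition has measure strictly smaller than $1/k^2$ after enlarging the constants.

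Summing the four sides of $\partial Q_j$, these bounds give $\int_{\partial Q_j} |F'|^p \leq C_p k \delta^p$, and hence a point $y^* \in \partial Q_j$ with $|F'(y^*)|^p \leq C_p k^2 \delta^p / 4$. The hypothesis $1/k^2 \geq \bar{C}_p \delta^p$ then yields $|F'(y^*)|^p \leq C_p / (4\bar{C}_p)$, which I make smaller than $(1/4)^p$ by choosing $\bar{C}_p$ sufficiently large (depending on $p$). Combined with the oscillation bound, for all $y \in \partial Q_j$,
\[
|F'(y)| \leq |F'(y^*)| + \osc_{\partial Q_j} F' \leq \tfrac{1}{4} + C\delta < \tfrac{1}{2}
\]
for $\delta_0$ small enough, whence $F_3^2 = 1 - |F'|^2 > 3/4$ on $\partial Q_j$, as required.

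I do not expect any step to be genuinely difficult; the only point requiring care is the bookkeeping in the Fubini/Chebyshev step to ensure that the new $L^p$-average condition on $a$ is compatible with the two oscillation-type conditions already used. Note that the dependence $\bar{C}_p \to \infty$ as $p\to\infty$ is harmless since $p$ is a free parameter to be optimized later (in the subsequent degree-theoretic argument based on $F\colon \partial Q_j \to S^2$).
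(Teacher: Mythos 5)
Your argument is correct, but it takes a different route from the paper. The paper argues by contradiction without touching the choice of $a$: if $F_3^2\leq \tfrac34$ at one boundary point, then \eqref{eq:osc} forces $|F'|^2\geq \tfrac15$ on all of $\partial Q_j$, a Poincar\'e inequality with vanishing boundary values applied to $(\tfrac15-|F'|^2)_+$ on the small square transfers this largeness to at least half of $Q_j$, and this contradicts the \emph{interior} measure bound $|\{|F'|^2\geq\tfrac16\}\cap Q_j|\leq C_p\delta^p$ from \eqref{eq:set-F-small} once $\tfrac1{k^2}\geq \bar C_p\delta^p$. You instead strengthen the Fubini/Chebyshev selection of the grid offset $a$ so that, in addition to \eqref{eq:a1}--\eqref{eq:a2}, the $L^p$ norm of $F'$ is controlled on the grid lines; this gives $\int_{\partial Q_j}|F'|^p\leq C_pk\delta^p$, hence a point $y^*\in\partial Q_j$ with $|F'(y^*)|$ small under the same hypothesis $k^2\delta^p\leq \bar C_p^{-1}$, and \eqref{eq:osc} then yields $|F'|<\tfrac12$, i.e.\ $F_3^2>\tfrac34$, on the whole boundary directly, with no contradiction and no Poincar\'e step. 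The two proofs exploit the same quantitative inputs (the $W^{1,2}$-smallness \eqref{eq:F-small}, its $L^p$ consequence, the oscillation bound \eqref{eq:osc}, and the threshold $k^2\delta^p\lesssim 1$), so neither is stronger; your version is somewhat more direct, while the paper's keeps the selection of $a$ minimal (only the two gradient conditions) and instead uses two-dimensional information on $Q_j$. Two small points of care in your write-up, neither fatal: the extra $L^p$ conditions on the grid lines require the $L^p$ bound for the extended map on a neighborhood of $Q=[-1,1]^2$, which is available from the same extension the paper already invokes; and the pointwise evaluation of $F'$ on $\partial Q_j$ (existence of $y^*$, and the oscillation bound) uses that for the chosen $a$ the trace on the grid lines is $W^{1,2}$, hence continuous --- exactly as already implicit in \eqref{eq:osc}. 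Also note $\bar C_p$ must dominate $C_p4^{p-1}$ so that $C_p/(4\bar C_p)\leq (1/4)^p$, which matches your remark that $\bar C_p$ grows with $p$; this is harmless since in Proposition \ref{prop:degree} one fixes $p=8$ before sending $\delta\to0$.
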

\begin{proof}
Assume that $F_3^2(y)=1-|F'|^2(y)\leq \frac{3}{4}$ for a $y\in \partial Q_j$, $j\in \{1,\dots,N\}$. Then we deduce from \eqref{eq:osc} that $|F'|^2\geq \frac{1}{5}$ on $\partial Q_j$ for $\delta_0>0$ small enough. By the Poincar{\'e} inequality on the unit cube, a rescaling argument, and $|F'|\leq 1$ this implies
\begin{align}
	\int_{Q_j} (\frac{1}{5}-|F'|^2)_+^2 \,&\leq\, C\frac{1}{k^2}\int_{Q_j} | D\big(\frac{1}{5}-|F'|^2\big)|^2 \notag\\
	&\leq\, C \frac{1}{k^2} \int_{Q_j} |D F'|^2 \,\leq\, C\frac{\delta^2}{k^2}\,\leq\, \frac{1}{1800}|Q_j| \label{eq:intFsmall}
\end{align}
for $\delta_0>0$ sufficiently small.
Therefore
\begin{gather*}
	|\{|F'|^2 < \frac{1}{6}\}\cap Q_j|\,\leq\, 900 \int_{Q_j} (\frac{1}{5}-|F'|^2)_+^2 \,\leq\,  \frac{1}{2}|Q_j|
\end{gather*}
and in particular by \eqref{eq:set-F-small}
\begin{gather}
	C_p\delta^p \,\geq\, |\{|F'|^2 \geq \frac{1}{6}\}\cap Q_j|\,\geq\, \frac{1}{2}|Q_j|\,=\, \frac{1}{2k^2}. \label{eq:Fbig}
\end{gather}
This gives a contradiction if $\frac{1}{k^2}\geq \bar{C}_p\delta^p$ and if $\bar{C}_p$ is chosen large enough.
\end{proof}
\begin{proof}[Proof of Proposition \ref{prop:degree}]
Let us assume \eqref{eq:lembdryQ}. This implies that the degree $d:=\deg(F,Q_j,\cdot)$ is constant on $\{\xi\in S^2\,:\, \xi_3^2<\frac{3}{4}\}$. If $d\neq 0$ then $\{\xi\in S^2\,:\,\xi_3^2<\frac{3}{4}\}\subset F(Q_j)$ and thus
\begin{align*}
	\Ha^2\big(\{\xi\in S^2\,:\,\xi_3^2< \frac{3}{4}\}\big)\,&\leq\, \int_{F(Q_j)} 1\,d\Ha^2\\
	&\leq\, \int_{Q_j} (\det DF^T DF)^\frac{1}{2} \,\leq\, \int_{Q_j} |D F|^2 \,\leq\, C\delta^2
\end{align*}
by \eqref{eq:F-small}, \eqref{eq:F3-small}. For $\delta<\delta_0$ small enough we therefore obtain a contradiction. This shows that $\deg(F,Q_j,\cdot)=0$ on $\{\xi\in S^2\,:\, \xi_3^2<\frac{3}{4}\}$. Since $\ctf(F_3^2)=1$ on $\{\xi\in S^2\,:\, \xi_3^2\leq \frac{3}{4}\}$ this implies for $g:S^2\to\R$, $g(\xi)=(1-\ctf(\xi_3^2))(\xi_1^2+\xi_2^2)$ and the volume form $\sigma$ on $S^2$ 
that
\begin{gather}
	0\,=\, \int_{Q_j} F^*\big(g\sigma\big)\,=\,\int_{Q_j} (1-\ctf(F_3^2)) |F'|^2  F\cdot \partial_1F \times \partial_2F. \label{eq:deg-g-0}
\end{gather}
We further deduce that
\begin{align}
	&\Big|\int_{Q_j} \eta |\Psi|^2 (1-\ctf(F_3^2))  |F'|^2 F\cdot \partial_1F \times \partial_2F\Big| \notag\\
	\,\leq\, &\Big|\int_{Q_j} \big(\eta |\Psi|^2- (\eta|\Psi|^2)(a^{(i)})\big)(1-\ctf(F_3^2))|F'|^2 F\cdot \partial_1F \times \partial_2F \Big|\notag\\
	&+\Big|(\eta|\Psi|^2)(a^{(i)})\big)\int_{Q_j} F^*\big(g\sigma\big)\Big| \notag\\
	\leq\,& {C_\alpha}(1+\Lip(\eta)){k^{-\alpha}}\int_{Q_j} |D F|^2, \label{eq:DF-alpha}
\end{align}
for $\alpha\in (0,1)$, since $\|1-|\Psi|^2\|_{C^{0,\alpha}(\D_1)}\leq C_\alpha\delta$ by \eqref{eq:Psi-1-small} and since $\int_{Q_j} F^*\big(g\sigma\big)=0$ by  \eqref{eq:deg-g-0}.

We then choose $\alpha=\frac{1}{2}$, $p=8$ and $\delta_0>0$ such that  $C_8\delta_0^8<1$ for the constant $C_8$ from Lemma \ref{lem:bdryQ}.
For $\delta<\delta_0$ we set $k= \lfloor C_8^{-\frac{1}{2}}\delta^{-4} \rfloor$. 
Then \eqref{eq:lembdryQ} is satisfied and \eqref{eq:DF-alpha} shows
\begin{align*}
	\int_{Q_j} \eta |\Psi|^2 |F'|^2 (1-\ctf(F_3^2)) F\cdot \partial_1F \times \partial_2F  &\geq\, -C\delta^2(1+\Lip(\eta))\int_{Q_j} |D F|^2,
\end{align*}
and by \eqref{eq:F-small} and \eqref{eq:F3-small} the claim follows.
\end{proof}
\subsubsection{Second term on the right-hand side of \eqref{eq:split-2}}
Since $\ctf\in C^\infty_c((\frac{1}{2},1])$ we can represent $\D_1\cap \spt(\ctf\circ F_3^2)$ as the disjoint union of the sets
\begin{align*}
	\D_1^+ \,:=\, \D_1\cap \{F_3>\frac{1}{2}\sqrt{2}\} \quad\text{ and }\quad\D_1^- \,:=\, \D_1\cap \{-F_3>\frac{1}{2}\sqrt{2}\}.
\end{align*}
We then have
\begin{gather*}
	F_3\,=\, \pm \sqrt{1-|F'|^2} \text{ on } \D_1^\pm
\end{gather*}
and
\begin{gather*}
	\partial_i F_3\,=\, -\frac{1}{F_3} F'\cdot\partial_i F',\quad\text{ i=1,2.}
\end{gather*}
A short computation shows that
\begin{gather}
	F\cdot \partial_1 F\times \partial_2 F \,=\, \frac{1}{F_3}\det F'\quad\text{ on } \D_1^+\cup \D_1^-. \label{eq:64bis}
\end{gather}
In the following we only consider the set $\D_1^+$, the set $\D_1^-$ can be treated in the same way. We write
\begin{gather*}
	\int_{\D_1^+}\eta |\Psi|^2 |F'|^2  \ctf(F_3^2) F\cdot\partial_1 F \times \partial_2 F\,=\, \int_{\D_1^+} \eta |\Psi|^2  \ctf(1-|F'|^2)\frac{|F'|^2}{\sqrt{1-|F'|^2}}\det DF'. \end{gather*}
\begin{proposition}\label{prop:6}
For any $\eta\in C^\infty_c(\D_1)$ the estimate
\begin{align*}
	\Big| \int_{\D_1^+}  \eta |\Psi|^2 \ctf(F_3^2)\frac{|F'|^2}{\sqrt{1-|F'|^2}} \det DF'\Big| \,\leq\,  C \delta^4
\end{align*}
holds.
\end{proposition}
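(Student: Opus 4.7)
The plan is to write $(F_1^2+F_2^2)\det DF'$ as a sum of two exact Jacobians and to transfer the two derivatives onto a smooth coefficient $\alpha$ by integration by parts; after this transfer the cubic factor $F_i^3$ that emerges is small of order $\delta^3$ in $L^3$, and combined with the remaining factor $\nabla F_j\in L^2$ of order $\delta$ this yields the desired $\delta^4$ bound. To construct $\alpha$, observe that on $\D_1^+$ we have $\sqrt{1-|F'|^2}=F_3>0$ and define $H\in C^\infty(\R)$ by $H(s):=\ctf(s^2)/s$ for $s>0$ and $H(s):=0$ for $s\leq 0$; since $\ctf(s^2)$ vanishes on $|s|\leq\sqrt{2}/2$, the function $H$ is smooth. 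Setting
\[
\alpha(y)\,:=\,\eta(y)|\Psi(y)|^2\,H(F_3(y)),
\]
we obtain $\alpha\in C^\infty(\D_1)$ with $\spt(\alpha)\subset\D_1^+\cap\spt(\eta)\Subset\D_1$, and on $\D_1^+$ the identity $\eta|\Psi|^2\ctf(F_3^2)|F'|^2/\sqrt{1-|F'|^2}=\alpha(F_1^2+F_2^2)$ holds.

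The chain rule gives the pointwise identities $F_1^2\det DF'=\det D(F_1^3/3,F_2)$ and $F_2^2\det DF'=-\det D(F_2^3/3,F_1)$. Using the null-Lagrangian representation $\det D(u,v)=\partial_1(u\,\partial_2 v)-\partial_2(u\,\partial_1 v)$ together with the compact support of $\alpha$, integration by parts yields $\int_{\D_1}\alpha\det D(u,v)=-\int_{\D_1}u\det D(\alpha,v)$ (the mixed partials of $v$ cancel), so that
\[
\int_{\D_1^+}\!\eta|\Psi|^2\ctf(F_3^2)\tfrac{|F'|^2}{\sqrt{1-|F'|^2}}\det DF'\,=\,-\!\int_{\D_1}\!\tfrac{F_1^3}{3}\det D(\alpha,F_2)+\!\int_{\D_1}\!\tfrac{F_2^3}{3}\det D(\alpha,F_1).
\]
Each term is then estimated by Hölder with exponents $(3,6,2)$,
\[
\Big|\int_{\D_1}\tfrac{F_i^3}{3}\det D(\alpha,F_j)\Big|\,\leq\,C\|F_i\|_{L^9(\D_1)}^3\|\nabla\alpha\|_{L^6(\D_1)}\|\nabla F_j\|_{L^2(\D_1)}.
\]
By \eqref{eq:F-small} and the Sobolev embedding $W^{1,2}(\D_1)\hookrightarrow L^9(\D_1)$ we get $\|F_i\|_{L^9}\leq C\delta$, while $\|\nabla F_j\|_{L^2}\leq C\delta$ is \eqref{eq:F-small} directly. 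For the middle factor I would use \eqref{eq:DeMu05} to bound $\|\Psi\|_{W^{2,2}(\D_1)}\leq C$ uniformly in $\delta$; smoothness of $\nu$ on $\Sigma$ then gives $\|M\|_{W^{2,2}},\|F_3\|_{W^{2,2}}\leq C$, and $W^{2,2}(\D_1)\hookrightarrow W^{1,6}(\D_1)$ yields $\|\nabla\alpha\|_{L^6}\leq C$ uniformly in $\delta$. Multiplying the three estimates produces $C\delta^3\cdot 1\cdot\delta=C\delta^4$ per term.

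The main obstacle is the first step: the factor $\ctf(F_3^2)$ in the integrand is symmetric in $F_3$ and hence supported on both $\D_1^+$ and $\D_1^-$, whereas $1/\sqrt{1-|F'|^2}=1/|F_3|$ carries a sign ambiguity between the two components, so a naive extension to $\D_1$ would create a jump across $\{F_3=0\}$ and thereby a boundary term in the integration by parts. The one-sided cutoff built into $H$ (namely $H(s)=0$ for $s\leq 0$) resolves this and keeps $\alpha$ smoothly compactly supported inside $\D_1^+$. As the Remark after \eqref{eq:K-M-lambda} indicates, the Hardy--BMO duality would only yield $\delta^3$ here, since $\|\phi\|_{\BMO}\leq C\|\nabla\phi\|_{L^2}\leq C\delta$ is the best possible estimate on the BMO norm of the integrand ($|F'|$ is small in every $L^p$ but not in $L^\infty$); the gain to $\delta^4$ comes precisely from preserving the determinant structure via $F_i^2\det DF'=\pm\det D(F_i^3/3,F_j)$, which exposes the cubic $F_i^3$ after one integration by parts.
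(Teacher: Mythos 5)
Your reduction to $\int_{\D_1}\alpha\,(F_1^2+F_2^2)\det DF'$ via the one-sided cutoff $H$ is fine, and the identities $F_1^2\det DF'=\det D(F_1^3/3,F_2)$, $F_2^2\det DF'=-\det D(F_2^3/3,F_1)$ and the integration by parts are correct. The gap is the claim $\|\nabla\alpha\|_{L^6(\D_1)}\leq C$ uniformly in $\delta$. Writing $\nabla\alpha=\nabla(\eta|\Psi|^2)\,H(F_3)+\eta|\Psi|^2H'(F_3)\nabla F_3$, the first part is indeed uniformly in $L^6$ (via $\|\Psi\|_{W^{2,2}}\leq C$), but the second is not: $F_3=M\cdot e_3$ with $M=\nu\circ\Psi$, so $\nabla F_3$ involves $(A\circ\Psi)\nabla\Psi$, and the second fundamental form is only controlled in $L^2$ uniformly; ``smoothness of $\nu$ on $\Sigma$'' gives finiteness of $\|M\|_{W^{2,2}}$ for each fixed surface but no bound independent of $\Sigma$, i.e.\ of $\delta$ --- the available estimates \eqref{eq:psiN-small}, \eqref{eq:F3-small} only give $\|\nabla F_3\|_{L^2}\leq C\delta$. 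Consequently the contribution where the derivative falls on $H(F_3)$, namely (up to signs) $\int \tfrac{F_i^3}{3}\,\eta|\Psi|^2 H'(F_3)\det D(F_3,F_j)$, contains two gradients that are each only $L^2$-small of order $\delta$, while the cubic factor is small only in $L^p$, $p<\infty$, not in $L^\infty$; the best the available bounds give is $\|F_i\|_{L^\infty}^3\|\nabla F_3\|_{L^2}\|\nabla F_j\|_{L^2}\leq C\delta^2$. Even if you use that on $\D_1^+$ one has $\nabla F_3=-\tfrac{1}{F_3}F'\cdot\nabla F'$, you are left with an integral of the type $\int|F'|^4|\nabla F'|^2$, which cannot be closed with $\nabla F'\in L^2$ only. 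So the step ``$\|\nabla\alpha\|_{L^6}\leq C$'' is not a technicality; as written the argument yields only $C\delta^2$.

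The paper avoids ever differentiating a composition with $F$ by pushing the entire weight onto the target: on $\D_1^+$ the weight $\ctf(F_3^2)/\sqrt{1-|F'|^2}$ is a function of $F'$ alone, and one constructs $h\in C^\infty(\R^2,\R^2)$ with $\nabla\cdot h(z)=|z|^2\ctf(1-|z|^2)/\sqrt{1-|z|^2}$ together with the crucial cubic vanishing $|h(z)|\leq C|z|^3$ (solve a Poisson equation and subtract the third-order Taylor polynomial). Then the whole integrand equals $\eta|\Psi|^2\,\nabla\cdot\bigl(\cof DF'^T h(F')\bigr)$, and after integrating by parts only $\eta|\Psi|^2$ is differentiated, which is uniformly bounded in $W^{1,8}$; H\"older with exponents $(8,8/3,2)$ gives $\|\nabla(\eta|\Psi|^2)\|_{L^8}\|F'\|_{L^8}^3\|DF'\|_{L^2}\leq C\delta^4$. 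If you want to keep your formulation, the fix is to absorb $H(F_3)=\ctf(1-|F'|^2)/\sqrt{1-|F'|^2}$ into the null-Lagrangian structure as a function of the target variable (prescribed divergence, as in the paper, or a prescribed-Jacobian variant), rather than leaving it as an external factor that gets hit by the derivative.
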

\begin{proof}
To rewrite the integrand we use that  for any differentiable $h:\R^2\to\R^2$ 
\begin{gather*}
	\nabla\cdot \Big(\cof DF'^T h(F')\Big)\,=\, (\nabla\cdot  h)(F') \det DF'
\end{gather*}
holds and construct  $h\in C^\infty(\R^2,\R^2)$ with
\begin{align}
	\nabla\cdot  h(z) \,=\, \frac{|z|^2\ctf(1-|z|^2)}{\sqrt{1-|z|^2}}, \notag\\
	|h(z)| \,\leq\,  C |z|^3 \quad\text{ for all }z\in \D_1. \label{eq:h-cube}
\end{align}
Extend $\ctf$ to $\ctf\in C^\infty_c((-\infty,2])$ by setting $\ctf(t)=0$ for $t\leq 0$, $\ctf(t)=1$ for $t\geq 1$. Then $\ctf(1-|z|^2)=0$ for $|z|^2\geq \frac{1}{2}$ and thus there exists a solution $q\in C^\infty(\R^2)$ of
\begin{gather*}
	\Delta q\,=\, \frac{|z|^2\ctf(1-|z|^2)}{\sqrt{1-|z|^2}},
\end{gather*}
which satisfies
\begin{gather*}
	\limsup_{z\to\infty} \frac{|q(z)|}{\ln (z)}\,<\,\infty.
\end{gather*}
Let  $T_3q$ denote the third order Taylor approximation of $q$ in $z=0$. We define $\tilde{q}(z):= q(z) - (T_3q)(z)$ and set $h(z)=\nabla \tilde{q}(z)$. Then all derivatives  of $h$  in $z=0$ up to second order vanish and $h(z)\leq C |z|^3$ holds for a suitable constant $C>0$. This is clear for $|z|\leq R$; on the other hand $q$ is harmonic on $\R^3\setminus B(0,R)$ and grows at most logarithmically. Hence $\nabla q$ is harmonic and satisfies $|\nabla q(z)|\leq \frac{C}{|z|}$ as $z\to\infty$.

Furthermore we deduce that
\begin{gather*}
	\nabla\cdot h\,=\, \Delta \tilde{q} \,=\, \frac{|z|^2\ctf(1-|z|^2)}{\sqrt{1-|z|^2}}.
\end{gather*}
As $\eta (\ctf\circ F_3^2)$ is compactly supported in $\D_1^+$ this implies
\begin{align*}
	 \int_{\D_1^+}  \eta |\Psi|^2 \ctf(F_3^2)\frac{|F'|^2}{\sqrt{1-|F'|^2}} \det DF' \,&=\,  \int_{\D_1^+}  \eta|\Psi|^2 \nabla\cdot  \Big(\cof DF'^T h(F')\Big)\\
	 &=\, \int_{\D_1^+}  \nabla\big(\eta|\Psi|^2\big)\cdot \Big(\cof DF'^T h(F')\Big).
\end{align*}
The integral on the right-hand side is estimated by
\begin{align*}
	&\Big|\int_{\D_1^+}  \nabla\big(\eta|\Psi|^2\big)\cdot \Big(\cof DF'^T h(F')\Big) \Big| \\
	\leq\,& \|\nabla (\eta|\Psi|^2)\|_{L^8(\D_1)}\|h(F')\|_{L^{8/3}(\D_1)}\|DF'\|_{L^2(\D_1)}\\
	\leq\, &C(1+\|\nabla\eta\|_{C^0(\D_1)})(1+\|\nabla\Psi\|_{L^8(\D_1)})\|F'\|_{L^{8}(\D_1)}^3\|DF'\|_{L^2(\D_1)}\\
	\leq\, &C(1+\|\nabla\eta\|_{C^0(\D_1)})\|DF'\|_{W^{1,2}(\D_1)}^4\,\leq\, C\|\eta\|_{C^1(\D_1)}\delta^4,
\end{align*}
where we have used \eqref{eq:h-cube}, the Sobolev inequality, and \eqref{eq:F-small}.
\end{proof}
\subsubsection{Third term on the right-hand side of \eqref{eq:split-2}}
\begin{proposition}\label{prop:junk}
For any $c_1>0$ and any $\delta<\delta_0$ we have
\begin{align} \label{eq:66}
	\int_{\D_1} \eta |\Psi|^2 |F'|^2 \big( R + |J\Psi|\big)\,\geq\,  -C(1+c_1)\delta^4 + \int_{\D_1} \eta  |F'|^2|J\Psi|  - \frac{C}{c_1}  \int_{\D_1 } |F'|^2.
\end{align}
\end{proposition}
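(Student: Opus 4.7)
The plan is to split the left-hand side into
$$\int_{\D_1}\eta|\Psi|^2|F'|^2(R+|J\Psi|) \,=\, \int_{\D_1}\eta|\Psi|^2|F'|^2|J\Psi| \,+\, \int_{\D_1}\eta|\Psi|^2|F'|^2 R,$$
and to write $|\Psi|^2=1-(1-|\Psi|^2)$ in the first integral, extracting the asserted main term $\int\eta|F'|^2|J\Psi|$. For the remainder $\int\eta(1-|\Psi|^2)|F'|^2|J\Psi|$ I will combine the Sobolev embedding $W^{2,2}(\D_1)\hookrightarrow C^0(\overline{\D_1})$ applied to \eqref{eq:Psi-1-small} with the bound \eqref{eq:est-JPsi} on $|J\Psi|$ to get at most $C\delta\int|F'|^2$. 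Setting $u:=\int_{\D_1}|F'|^2$, a weighted Young inequality then yields $C\delta u\le c_1\delta^4+\tfrac{C}{c_1}u$ for all $c_1>0$, the arising $u^2$-term being absorbed via $u\le C\delta^2$ from \eqref{eq:F-small}.

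For the $R$-piece I will decompose $R$ according to \eqref{eq:def-R} into $M\cdot R^{(1)}$ and the two mixed cross-product terms $M\cdot(\partial_1\Psi\times\partial_2(M-\Psi))$ and $M\cdot(\partial_1(M-\Psi)\times\partial_2\Psi)$, and estimate each summand by H\"older using: $\|R^{(1)}\|_{L^q(\D_1)}\le C_q\delta$ for all $q<\infty$ from \eqref{eq:Ri-small} and Sobolev embedding; the uniform bound $\|\partial_i\Psi\|_{L^q(\D_1)}\le C_q$ for $q<\infty$ (which follows from $\sgp^{-1}\in C^\infty(\overline{\D_1})$ together with \eqref{eq:DeMu05}); and $\|\partial_i(M-\Psi)\|_{L^2(\D_1)}\le C\delta$ from \eqref{eq:psiN-small}. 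The crucial tool to pair these factors against $|F'|^2$ is the two-dimensional Gagliardo--Nirenberg interpolation
$$\|F'\|_{L^q(\D_1)}\,\le\, C_q\|F'\|_{L^2(\D_1)}^{2/q}\|\nabla F'\|_{L^2(\D_1)}^{1-2/q}\,\le\, C_q\,u^{1/q}\,\delta^{1-2/q}\qquad(q\in[2,\infty)),$$
which upon squaring gives $\||F'|^2\|_{L^{q/2}(\D_1)}\le C\,u^{2/q}\delta^{2-4/q}$. With a suitable choice of H\"older exponents this leads to the key estimate
$$\Bigl|\int_{\D_1}\eta|\Psi|^2|F'|^2 R\Bigr|\,\le\, C\delta^2\sqrt{u}.$$
A further Young inequality $2\delta^2\sqrt{u}\le 2c_1\delta^4+\tfrac{u}{2c_1}$ then places this in the form $c_1\delta^4+\tfrac{C}{c_1}u$, and summing the two contributions gives \eqref{eq:66}.

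The main obstacle is the $R$-estimate: a crude H\"older bound gives only $O(\delta^3)$, which cannot be absorbed uniformly in $c_1>0$ into $c_1\delta^4+\tfrac{C}{c_1}u$ when $u\ll\delta^2$. The Gagliardo--Nirenberg refinement exploits that $\|F'\|_{L^2}^2=u$ genuinely encodes the smallness of $F'$ beyond the a~priori bound $u\le C\delta^2$; squeezing out an extra factor $\sqrt{u}$ from $|F'|^2$ is exactly what matches the product structure $c_1\delta^4\cdot\tfrac{u}{c_1}$ dictated by the right-hand side of \eqref{eq:66}.
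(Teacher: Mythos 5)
Your handling of the two easy pieces is fine: extracting the main term via $|\Psi|^2=1-(1-|\Psi|^2)$ together with the $C^0$-smallness of $1-|\Psi|^2$ and $|J\Psi|\leq 5$, and the term $M\cdot R^{(1)}$ (note, however, that you cite \eqref{eq:Ri-small}, which concerns $R^{(1)}_i$; what you actually need is \eqref{eq:R1-small}, $\|R^{(1)}\|_{L^q(\D_1)}\leq C_q\delta^2$ for $q<2$, which gives a clean $C\delta^4$ bound). The genuine gap is in the mixed cross-product terms of \eqref{eq:def-R}. Writing $u:=\int_{\D_1}|F'|^2$, your key estimate $\bigl|\int_{\D_1}\eta|\Psi|^2|F'|^2R\bigr|\leq C\delta^2\sqrt{u}$ does not follow from H\"older plus Gagliardo--Nirenberg with the norms you list: $\partial_i(M-\Psi)$ is controlled only in $L^2$ (by $C\delta$, with no gain in terms of $u$), so in the triple H\"older you must put $\partial_i\Psi$ in some $L^b$ with $b<\infty$ (it is \emph{not} in $L^\infty$; $W^{2,2}\hookrightarrow W^{1,\infty}$ is exactly the failing endpoint) and hence $|F'|^2$ in $L^a$ with $a>2$. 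Gagliardo--Nirenberg then yields only $Cu^{1/a}\delta^{3-2/a}$, i.e.\ $u^{1/2-\epsilon}\delta^{2+2\epsilon}$, never $u^{1/2}\delta^{2}$. This loss is fatal for \eqref{eq:66}: for fixed $c_1$ and, say, $u\sim\delta^4$, one has $u^{1/a}\delta^{3-2/a}=\delta^{3+2/a}\gg (1+c_1)\delta^4+\tfrac1{c_1}u$ as $\delta\to 0$, and the subsequent Young step only produces $\delta^{4-\epsilon'}$, which is not enough for the proposition (nor for the $\sqrt{a-4\pi}$ rate in Theorem \ref{thm:lb-4pi}).

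The paper avoids estimating these terms as generic products: it uses the identity $M\cdot\big(\partial_1\Psi\times\partial_2(M-\Psi)+\partial_1(M-\Psi)\times\partial_2\Psi\big)=(H\circ\Psi-2)|J\Psi|$, which converts them into a scalar of $L^2$-norm $O(\delta)$ multiplying $|F'|^2$, and then works locally on balls of radius $r=c_1\delta$ with the Ladyzhenskaya-type estimate \eqref{eq:Lady}; summing over a bounded-overlap partition and choosing $r=c_1\delta$ gives exactly $-Cc_1\delta^4-\tfrac{C}{c_1}\int_{\D_1}|F'|^2$, and the $(|\Psi|^2-1)$ correction is treated the same way. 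If you insist on a global interpolation argument, you must inject extra structure, e.g.\ split $\partial_i\Psi=\partial_i\sgp^{-1}+w_i$ with $\|w_i\|_{W^{1,2}}\leq C\delta$ by \eqref{eq:DeMu05}: the bounded part gives $C\|F'\|_{L^4}^2\|\partial(M-\Psi)\|_{L^2}\leq Cu^{1/2}\delta^2$, which Young converts into $\tfrac{u}{2c_1}+\tfrac{c_1}{2}\delta^4$, while the remainder gives $Cu^{1/a}\delta^{4-2/a}\leq C\delta^4$ using $u\leq C\delta^2$. As written, though, the claimed $C\delta^2\sqrt{u}$ bound is unsupported and the proof of \eqref{eq:66} is incomplete.
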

\begin{proof}
We recall from \eqref{eq:R1-small},\eqref{eq:def-R} that
\begin{gather*}
	R\,=\,  M\cdot R^{(1)}+ M\cdot \big(\partial_1\Psi\times \partial_2 (M -\Psi) +\partial_1(M-\Psi)\times\partial_2\Psi\big),\\
	\|R^{(1)}\|_{L^q(\D_1)}\,\leq\, C_q \delta^2 \quad\text{ for any }1\leq q<2.
\end{gather*}
Together with \eqref{eq:F-small} the last estimate implies
\begin{align}
	\Big| \int_{\D_1} \eta |\Psi|^2 |F'|^2 M\cdot R^{(1)}\Big| \,&\leq\, C\|F'\|_{L^6(\D_1)}^2 \|R^{(1)}\|_{L^{\frac{3}{2}}(\D_1)} \notag\\
	&\leq\, C\|F'\|_{W^{1,2}(\D_1)}^2\delta^2\,\leq\, C\delta^4. \label{eq:66bis}
\end{align}
We moreover observe that
\begin{gather*}
	M\cdot \big(\partial_1\Psi\times \partial_2 (M -\Psi) +\partial_1(M-\Psi)\times\partial_2\Psi\big) \,=\, \big(H\circ\Psi -2\big) |J\Psi|.
\end{gather*}
It remains to show that
\begin{gather}
	\int_{\D_1} \Big(\eta  |F'|^2 |\Psi|^2  \big( H\circ\Psi - 1\big) -  \eta  |F'|^2 \Big)|J\Psi| \,\geq\,  -Cc_1\delta^4    - \frac{C}{c_1}  \int_{\D_1 } |F'|^2.
	 \label{eq:R+Jac}
\end{gather}
We proceed similarly as in the proof of Proposition \ref{prop:est1.1}. Choose a finite partition of unity $1=\sum_{i=1}^N \vartheta_i$ on $\D_1$ such that  $\nbr\{1\leq i\leq N\,:\, y\in\spt(\vartheta_i)\}\leq C_B$ for all $y\in \D_1$ and such that $0\leq \vartheta_i\leq 1$ for all $i=1,\dots,N$ and $\vartheta_i\in C^\infty(\D_r(y^i))$ for $r=c_1\delta$ chosen below. 
We prove the following auxiliary result.
\begin{lemma}
Let $r>0$ and $f\in W^{1,2}(\D_r,\R^2)$, $h\in L^2(\D_r)$. Then
\begin{gather}
	\Big| \int_{\D_r} |f|^2h \Big| \,\leq\, Cr \|D f\|_{L^2(\D_r)}^2\|h\|_{L^2(\D_r)} + \Big(\int_{\D_r} |f|^2\Big)\frac{C}{r}\|h\|_{L^2(\D_r)}. \label{eq:Lady}
\end{gather}
\end{lemma}
\begin{proof}
This is proved like the Ladyzhenskaya estimate $\|g\|_{L^4(\R^2)}\,\leq\, C \|g\|_{L^2(\R^2)}\|Dg\|_{L^2(\R^2)}$ \cite{Lady59}. Indeed, first observe that the desired estimate is invariant under dilation and it hence suffices to consider $r=1$. Now
\begin{align*}
	\|D |f|^2\|_{L^1(\D_1)} \,&=\, \|2f Df\|_{L^1(\D_1)} \,\leq\, 2\|f\|_{L^2(\D_1)}\|Df\|_{L^2(\D_1)} \notag\\
	&\leq\, \|f\|_{L^2(\D_1)}^2 + \|Df\|_{L^2(\D_1)}^2. 
\end{align*}
Since $\| |f|^2 \|_{L^1(\D_1)}=\|f\|_{L^2(\D_1)}^2$ the Sobolev embedding $W^{1,1}(\D_1)\hookrightarrow L^2(\D_1)$ yields
\begin{align*}
	\| |f|^2 \|_{L^2(\D_1)} \,\leq\, C\Big(\|f\|_{L^2(\D_1)}^2 + \|Df\|_{L^2(\D_1)}^2\Big).
\end{align*}
This implies 
\begin{gather*}
	\Big| \int_{\D_r} |f|^2h \Big| \,\leq\, \| |f|^2\|_{L^2(\D_1)} \|h\|_{L^2(\D_1)} \,\leq\,  C\Big(\|f\|_{L^2(\D_1)}^2 + \|Df\|_{L^2(\D_1)}^2\Big)\|h\|_{L^2(\D_1)},
\end{gather*}
which yields \eqref{eq:Lady} for $r=1$ and hence for all $r>0$. 
\end{proof}
%
%
Fix $1\leq i\leq N$ and apply the previous lemma for $f=F'$ on $\D_r(y^i)$. 
Note that by \eqref{eq:DeMu05-org}
\begin{gather*}
	\int_{\D_1} \big(H\circ\Psi -2\big)^2\,|J\Psi| \,\leq\, C\delta^2.
\end{gather*}
Using that $|\Psi|\leq 1$, $0\leq\eta\vartheta_i\leq 1$, and \eqref{eq:est-JPsi} we then obtain
\begin{align}
	&\Big| \int_{D_r(y^i)} \eta \vartheta_i |F'|^2 |\Psi|^2 (H\circ\Psi-2) |J\Psi| \Big| \notag\\
	\leq\,& Cr \|DF'\|_{L^2(\D_r(y^i))}^2\|H\circ\Psi-2\|_{L^2(\D_r(y^i))} 
	+ \frac{C}{r}\|H\circ\Psi-2\|_{L^2(\D_r(y^i))}\int_{\D_r} |F'|^2 \notag\\	
	\leq\,& Cr\delta \|DF'\|_{L^2(\D_r(y^i))}^2 + \frac{C}{r}\delta \int_{\D_r(y^i)} |F'|^2. \label{eq:63}
\end{align}
Similarly we have
\begin{align}
	&\int_{\D_r(y^i)} \eta\vartheta_i |F'|^2\big(|\Psi|^2-1\big)|J\Psi| \notag\\
	\leq\, &\Big(Cr \|DF'\|_{L^2(\D_r(y^i))}^2+\frac{C}{r}\int_{\D_r(y^i)}|F'|^2\Big)\||\Psi|^2-1\|_{L^2(\D_r(y^i))} \notag\\
	\leq\, &\Big(Cr \|DF'\|_{L^2(\D_r(y^i))}^2+\frac{C}{r}\int_{\D_r(y^i)}|F'|^2\Big)C\delta \label{eq:69bis}
\end{align}
by \eqref{eq:Psi-1-small}. Summing \eqref{eq:63}, \eqref{eq:69bis} over $i$ we we get with $\|DF'||_{L^2(\D_1)}\leq C\delta$
\begin{align}
	&\int_{\D_1} \eta|F'|^2|\Psi|^2\big(H\circ\Psi-2\big)|J\Psi| + \eta|F'|^2\big(|\Psi|^2-1\big)|J\Psi| \notag\\
	\geq\, & -CC_Br\delta^3 -\frac{CC_B\delta}{r}\int_{\D_1}|F'|^2. \label{eq:73}
\end{align}
Now let $r = c_1 \delta$. Then \eqref{eq:73} implies \eqref{eq:R+Jac}.
\end{proof}
\subsection{Conclusion}
We are now ready to prove Theorem \ref{thm:lb-4pi}. We choose a partition of unity $1=\sum_{i=1}^6 \tilde{\eta}_i$ on $S^2$ such that for each $i=1,\dots,6$ the function $\tilde{\eta}_i$ are given as $\eta_i\circ \sgp_i^{-1}$ where $\eta_i \in C^\infty_c(\D_1)$ and $\sgp_i$ is a standard stereographic projection. From Proposition \ref{prop:est1.1}, Proposition \ref{prop:degree}, Proposition \ref{prop:6}, and Proposition \ref{prop:junk} we obtain that there exists $\delta_0>0$ such that for all $c_0,c_1>0$ and any $\delta<\delta_0$  
\begin{align*}
	&\int_\Sigma \Big(1-\lambda(x\cdot\nu(x))^2 +
  	\frac{\lambda}{2}|x- (x\cdot\nu(x))\nu(x)|^2\Big) K(x)\,d\Ha^2(x) \\
	\geq\, &-\sum_{i=1}^6  C\big(1+c_0 + c_1 \big) \delta^4 + 
	\sum_{i=1}^6  \Big(\frac{1}{2} \int_{\Sigma} \tilde{\eta}_i \big(1-\lambda|x|^2\big) - \frac{C}{c_0^2} \int_{\Sigma} (1-\lambda|x|^2)\Big)\\
	&  +\sum_{i=1}^6  \Big(\int_{\Sigma} \tilde{\eta}_i(x) |x-(x	\cdot\nu(x))\nu(x)|^2  - \frac{C}{c_1}  \int_{\D_1 } |x-(x\cdot\nu(x))\nu(x)|^2\Big)\\
  	\geq\,& -C(1+c_0+c_1)\delta^4 +\Big(\frac{1}{2}- \frac{C}{c_0^2}\Big) \int_{\Sigma} (1-\lambda|x|^2) +  \Big(1-\frac{C}{c_1}\Big)\int_{\Sigma}  |x-(x	\cdot\nu(x))\nu(x)|^2 .
\end{align*}
Choosing $c_0,c_1$ large enough the last two terms become nonnegative. Together with \eqref{eq:K-M-lambda} this proves
\begin{gather*}
	a-4\pi \,\leq\, C\delta^4 \,=\, C (\W(\Sigma)-4\pi)^2
\end{gather*}
for all $\delta<\delta_0$ and all $\Sigma\in\M_a$. This concludes the proof of Theorem \ref{thm:lb-4pi}.
\section{Proof of Theorem \ref{thm:m-sphere}}
\label{sec:proof}
We have to construct a sequence
$(\Sigma_j)_{j\in\N}\subset \M_{8\pi}$ such that $\W(\Sigma_j)\to 8\pi$ as
$j\to \infty$. 
\step{1}
Depending on a parameter $0<r<1$ we construct a curve $\gamma_+$ in
the upper right quarter of the $(x,y)$-plane and obtain a surface
$\Sigma_+$ in space by rotating $\gamma_+$ around the $y$-axis.

For $0<r<1$ given we determine
$0<r_1<1$, $(x_1,y_1), (x_0,y_0)\in B_1(0)$, $0<\beta<\pi/2$, and
$0<\lambda<x_0$ 
such that (see Figure \ref{fig:const})
\begin{itemize}
\item
the sphere $S_{r_1}(x_1,y_1)$ touches the unit sphere from
inside at $(\cos(\beta),\sin(\beta))$,
\item
the catenoid $\{(x,y): y = y_0 \pm \lambda \arccosh(\frac{x}{\lambda})\}$
touches $S_{r_1}(x_1,x_2)$ for $x=\lambda$ in
$(x_1,y_1)+r_1(\cos(\pi/2+\beta),\sin(\pi/2+\beta))$,
\item
the catenoid $\{(x,y): y = y_0 \pm \lambda \arccosh(\frac{x}{\lambda})\}$
touches $S_r(0)$ for $x=-\lambda$ in $(\cos(\pi/2-\beta),\sin(\pi/2-\beta))$.
\end{itemize}
\begin{figure}[h]
  \includegraphics*[width=12cm]{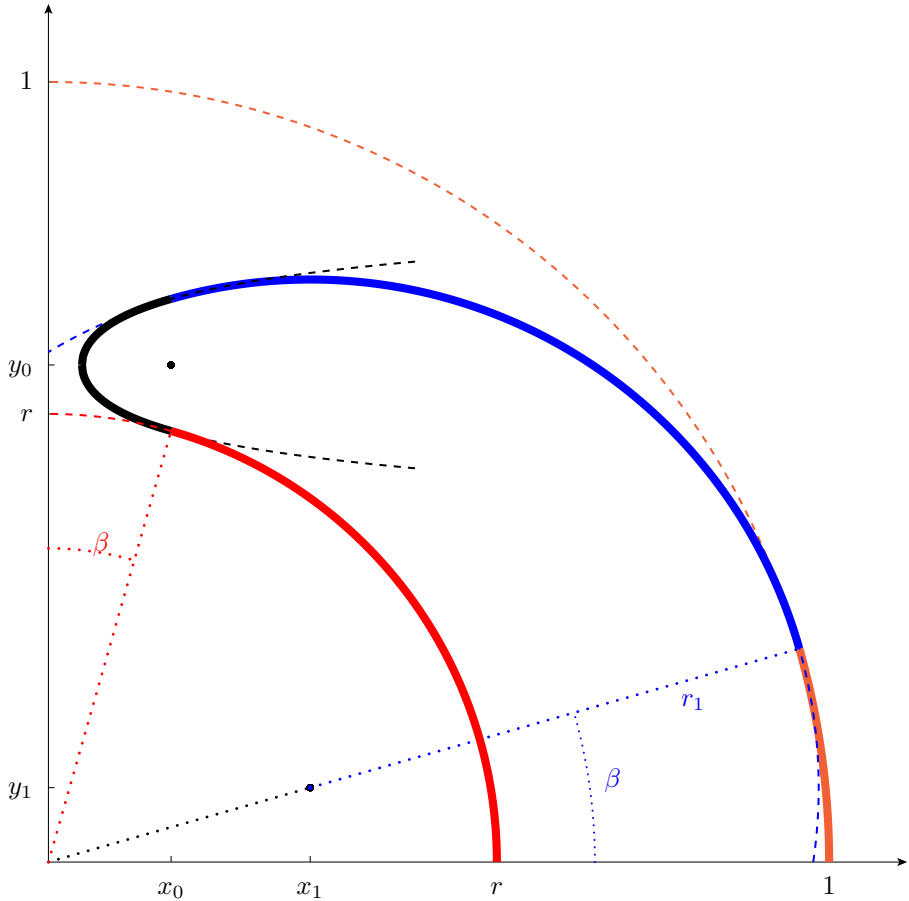}
  \caption{The construction in the upper half-space}
  \label{fig:const}
\end{figure}
This way we obtain a $C^1$ curve $\gamma_+$ in the $(x,y)$-plane by
pasting together the traces of
\begin{itemize}
\item
a curve $\gamma_1$ that parametrizes the unit circle from $(1,0)$ to
$(\cos(\beta),\sin(\beta))$  (the solid green line in Figure
\ref{fig:const}):
\begin{gather*}
  \gamma_1: (0,\beta)\,\to\, \R^2,\quad \gamma_1(s)\,=\,
  \begin{pmatrix}\cos s\\ \sin s\end{pmatrix}.
\end{gather*}
\item
a curve $\gamma_2$ that follows the circle $S_{r_1}((x_1,y_1))$ from
$(\cos(\beta),\sin(\beta))$ to
$(x_1,y_1)+r_1\big(\cos(\beta+\pi/2),\sin(\beta+\pi/2)\big)$ (the solid
blue line in Figure \ref{fig:const})  
\begin{gather}
  \gamma_2: (\beta,\beta+r_1\frac{\pi}{2})\,\to\, \R^2, \qquad
  \gamma_2(s)\,=\, \begin{pmatrix}x_1\\x_0\end{pmatrix} +
  r_1\begin{pmatrix}\cos (\beta +r_1^{-1}(s-\beta))\\
    \sin (\beta  +r_1^{-1}(s-\beta)) \end{pmatrix}
\end{gather}
\item
two curves $\gamma_3^\pm$ that describes the catenary $\{(x,y): |y - y_0| = \lambda
\arccosh(\frac{x}{\lambda})\}$ for $\lambda\leq x\leq x_0$ (the solid
black line in Figure \ref{fig:const})
\begin{gather*}
  \gamma_3: (\lambda,x_0)\,\to\, \R^2, \qquad \gamma_3(x)\,=\, 
  \begin{pmatrix} x\\ y_0 \pm  \lambda (\arccosh x/\lambda). 
  \end{pmatrix}
\end{gather*}
\item
and finally a curve $\gamma_4$ that parametrizes the circle $S_{r}(0)$ between
$r(\cos(\pi/2-\beta),\sin(\pi/2-\beta))$ and $(r,0)$ (the solid red line in Figure
\ref{fig:const}).
\begin{gather*}
  \gamma_4: (0,\frac{\pi}{2}-\beta)\,\to\R^2,\qquad \gamma_4(s)\,=\,
  r\begin{pmatrix} \cos (s/r)\\\sin (s/r)\end{pmatrix}
\end{gather*}
\end{itemize}
\step{2}
The conditions above are expressed
in the following system of equations, 
\begin{align}
  \begin{pmatrix}x_1\\y_1\end{pmatrix} +
  r_1\begin{pmatrix}\cos\beta\\\sin\beta\end{pmatrix} \,&=\,
  \begin{pmatrix}\cos\beta\\\sin\beta\end{pmatrix}, \label{eq:sys1}\\
  \begin{pmatrix}x_0\\y_0 +\lambda \arccosh (\lambda^{-1}x_0)\end{pmatrix} \,&=\,
  \begin{pmatrix}x_1\\y_1\end{pmatrix} +
  r_1\begin{pmatrix}-\sin\beta\\\cos\beta\end{pmatrix}, \label{eq:sys2}\\
  \frac{1}{x_0}\begin{pmatrix}\sqrt{x_0^2-\lambda^2}\\\lambda
  \end{pmatrix} \,&=\,
  \begin{pmatrix}\cos\beta\\\sin\beta\end{pmatrix}, \label{eq:sys3}\\
  \begin{pmatrix}x_0\\y_0-\lambda\arccosh (\lambda^{-1}x_0)\end{pmatrix} \,&=\,
  r\begin{pmatrix}\sin\beta\\\cos\beta\end{pmatrix}. \label{eq:sys4}
\end{align}
After some manipulations, and defining $F:\R^3\,\to\,\R^2$ by
\begin{gather*}
  F(r,r_1,\beta)\,=\, \begin{pmatrix}
  r\cos\beta + 2r\sin^2\beta\arccosh\frac{1}{\sin\beta}
  -r_1\cos\beta -(1-r_1)\sin\beta\\
  (r + r_1)\sin\beta - (1-r_1)\cos\beta \end{pmatrix}
\end{gather*}
we obtain the equivalent system
\begin{align}
  0\,&=\, F(r,r_1,\beta), \label{eq:rsys1}\\
  x_1\,&=\, (1-r_1)\cos\beta, \label{eq:rsys3}\\
  y_1\,&=\, (1-r_1)\sin\beta, \label{eq:rsys4}\\
  x_0\,&=\, r\sin\beta, \label{eq:rsys5}\\
  \lambda\,&=\, x_0\sin\beta, \label{eq:rsys6}\\
  y_0\,&=\,\frac{1}{2}\Big(y_1 + (r+r_1)\cos\beta\Big). \label{eq:rsys7}
\end{align}
We next observe that $F(1,1,0)=0$ and that $F$ is continuously
differentiable. Moreover we have
\begin{gather*}
  \det \Big(\partial_{r_1}F\quad \partial_\beta F\Big)(1,1,0)\neq 0.
\end{gather*}
Hence, by the Implicit Function Theorem, we obtain $C^1$-functions $r_1=r_1(r)$,
$\beta=\beta(r)$ such that $(r,r_1(r),\beta(r))$ satisfy
\eqref{eq:rsys1} for $0<r<1$ close to one. 
For the derivatives of $r_1,\beta$ with respect to $r$ we obtain that
\begin{gather}
   r_1'(1) \,=\, 1, \qquad \beta'(1)\,=\,
   -\frac{1}{2}, \label{eq:prime-r}
\end{gather}
which shows that $0<r_1<1$ and $0<\beta<\pi/2$ for $0<r<1$ close to
one. $(x_0,y_0),(x_1,y_1)$ and $\lambda$ are easily determined from
\eqref{eq:rsys3}-\eqref{eq:rsys7} and are in the range of meaningful
values with respect to our construction. In particular we obtain
\begin{gather}
  (x_0,y_0)\,\to\, (0,1), \quad (x_1,y_1)\,\to\, (0,0), \quad
  \lambda\,\to\, 0 \qquad\text{ as }r\nearrow 1. \label{eq:lim-x0y0}
\end{gather}
\step{3}
We compute the surface area of $\Sigma_+$. Let $A_i$, $i=1,...,4$ denote 
the surface area of the parts of the surface that belong to the curves
$\gamma_i$.
Since $\gamma_i$ is parametrized by arc-length for $i=1,2,4$ the
corresponding surface area elements are given by the $x$-components of
$\gamma_i$. We therefore deduce that
\begin{align}
  A_1\,&=\, 2\pi \int_0^\beta \cos s\,ds \,=\, 2\pi \sin
  \beta, \label{eq:A1}\\
  A_2\,&=\, 2\pi \int_\beta^{\beta+r_1\frac{\pi}{2}} (1-r_1)\cos\beta +
  r_1\cos(\beta +r_1^{-1}(s-\beta))\, ds \notag\\
  &=\,
  2\pi\Big(r_1(1-r_1)\frac{\pi}{2}\cos\beta + r_1^2(\cos \beta
  -\sin\beta)\Big), \label{eq:A2}\\
  A_4\,&=\, 2\pi\int_0^{\pi/2-\beta} r\cos(s/r)\,ds\,=\, 2\pi r^2
  \cos\beta.   \label{eq:A4}
\end{align}
The curve $\gamma_3$ parametrizes the upper and lower part of the
catenary as two graphs.
Since the surface area element for the rotation of a graph $x\mapsto
(x,f(x))$ around the $y$-axis is given by $x\sqrt{1+f'(x)^2}$ we obtain
that
\begin{align}
  A_3\,&=\, 2\cdot 2\pi \int_\lambda^{x_0} \Big(1 +
  \frac{\lambda^2}{x^2-\lambda^2}\Big)^{1/2}x\,dx \,=\,
  2\pi\Big(x_0\sqrt{x_0^2-\lambda^2} +
  \lambda^2\arccosh\frac{x_0}{\lambda}\Big) \notag\\
  &=\, 2\pi\Big(r^2\sin^2\beta\cos\beta
  +r^2\sin^4\beta\arccosh\frac{1}{\sin\beta}\Big).  \label{eq:A3} 
\end{align}
The surface area of $\Sigma_+$ is thus given as
\begin{align}
  A_+\,:=\,&\ar(\Sigma_+)
  =\, A_1 +A_2+  A_3 +A_4\notag\\
  \,=\, &2\pi\Big(
  (1-r_1^2)\sin\beta +r_1 (1-r_1)\frac{\pi}{2}\cos\beta + (r_1^2+r^2)\cos\beta
  + \notag\\
  &\qquad\qquad\qquad + r^2\sin^2\beta\cos\beta
  +r^2\sin^4\beta\arccosh\frac{1}{\sin\beta}\Big). \label{eq:A+}
\end{align}
If we develop $A_+=A_+(r)$ at $r=1$ we obtain $A(1)=4\pi$ and
\begin{gather}
  A'(1)\,=\, 2\pi\Big(-\frac{\pi}{2} +2\Big)\,>\,0. \label{eq:A-prime}
\end{gather}
Moreover we see that
\begin{gather}
  A_1,A_3\,\to\, 0,\quad A_2,A_4\,\to\, 2\pi\qquad\text{ as }r\nearrow 1. \label{eq:lim-Ai}
\end{gather}
\step{4}
We compute the Willmore energy of the different parts. Since all these
parts have constant mean curvature given by
$2,\frac{2}{r_1},0,\frac{2}{r}$ respectively we obtain
\begin{align}
  W_+\,:=\,\W(\Sigma_+)\,&=\, \frac{1}{2}\pi( 4A_1 + \frac{4}{r_1^2}A_2 +
  \frac{4}{r^2}A_4)\notag\\
  &=\, \frac{1}{2}\pi\Big(4\sin\beta + \Big(\frac{1-r_1}{r_1}\frac{\pi}{2}\cos\beta + (\cos \beta
  -\sin\beta)\Big) +  \cos\beta\Big) \notag\\
  &=\, \pi^2\frac{1-r_1}{r_1}\cos\beta + 4\pi\cos\beta. \label{eq:W-S+}
\end{align}
From the first line and \eqref{eq:lim-Ai} we also get that
\begin{gather}
  W_+(r)\,\to\, 4\pi\qquad\text{ as }r\nearrow 1 \label{eq:lim-Wi}
\end{gather}
and
\begin{gather}
  W_+'(1)\,=\, -\pi^2. \label{eq:div-W}
\end{gather}
\step{5}
Finally, we add the lower part of the construction. With this aim we put
${\Sigma}_-$ to be the union of the lower unit sphere and the
lower part of the sphere $S_r(0)$, where we have added an inward bump
similar to the construction in Theorem \ref{thm:m-sphere}.
We can then choose the size of a bump in such a way that
$\Sigma=\Sigma_+\cup\Sigma_-$ satisfy the area constraint 
$\ar(\Sigma)\,=\, 8\pi$ and such that $\W(\Sigma)$ is arbitrarily
close to $2\W(S^2)$.
\def\cprime{$'$}

%
\end{document}